\newtheorem{theorem}{Theorem}[section]
\newtheorem{corollary}{Corollary}[theorem]
\newtheorem{lemma}[theorem]{Lemma}
\newtheorem{proposition}[theorem]{Proposition}
\newcommand*{\da@rightarrow}{\mathchar"0\hexnumber@\symAMSa 4B }
\newcommand*{\da@leftarrow}{\mathchar"0\hexnumber@\symAMSa 4C }
\newcommand*{\xdashrightarrow}[2][]{%
  \mathrel{%
    \mathpalette{\da@xarrow{#1}{#2}{}\da@rightarrow{\,}{}}{}%
  }%
}
\newcommand{\xdashleftarrow}[2][]{%
  \mathrel{%
    \mathpalette{\da@xarrow{#1}{#2}\da@leftarrow{}{}{\,}}{}%
  }%
}
\newcommand*{\da@xarrow}[7]{%
  \sbox0{$\ifx#7\scriptstyle\scriptscriptstyle\else\scriptstyle\fi#5#1#6\m@th$}%
  \sbox2{$\ifx#7\scriptstyle\scriptscriptstyle\else\scriptstyle\fi#5#2#6\m@th$}%
  \sbox4{$#7\dabar@\m@th$}%
  \dimen@=\wd0 %
  \ifdim\wd2 >\dimen@
    \dimen@=\wd2 %
  \fi
  \count@=2 %
  \def\da@bars{\dabar@\dabar@}%
  \@whiledim\count@\wd4<\dimen@\do{%
    \advance\count@\@ne
    \expandafter\def\expandafter\da@bars\expandafter{%
      \da@bars
      \dabar@ 
    }%
  }%
  \mathrel{#3}%
  \mathrel{%
    \mathop{\da@bars}\limits
    \ifx\\#1\\%
    \else
      _{\copy0}%
    \fi
    \ifx\\#2\\%
    \else
      ^{\copy2}%
    \fi
  }%
  \mathrel{#4}%
}
\theoremstyle{definition}
\newtheorem{definition}[theorem]{Definition}
\newtheorem{remark}[theorem]{Remark}
\title{GIT of complete intersections and log canonical thresholds}
\address{School of Mathematics and Statistics, University of Glasgow, University Place, Glasgow, G12 8QQ, United Kingdom}
\email{theodorosstylianos.papazachariou@glasgow.ac.uk}
\author{Theodoros Stylianos Papazachariou}
\date{December 2022}
\begin{document}
\maketitle

\begin{abstract}
We establish a link between the GIT stability of complete intersections of same degree hypersurfaces in the same ambient projective space, which can be parametrised as tuples in a Grassmanian scheme, and the log canonical thresholds of hypersurfaces contained in these tuples. We also establish a  similar link in the case of Variations of GIT quotients, where we consider tuples of complete intersections and distinct hyperplanes.
\end{abstract}

\section{Introduction}\label{intro}

Geometric Invariant Theory (GIT), is a powerful theory  developed by Mumford in the 60s, used to model moduli spaces for algebraic varieties. In recent years, GIT has found particular use in compactifying moduli spaces of Fano varieties, as it has been used to explicitly compactify moduli spaces that arise from K-stability \cite{odaka_spotti_sun_2016, liu-xu-threefolds, liu2020kstability, me}. In finding such explicit compactifications a necessary step is to describe stable, polystable and semistable points of the corresponding GIT quotient.

Computational methods to study GIT quotients are not particularly novel, with a number of different constructions present \cite{laza-cubics, gallardo_martinez-garcia_2018, Gallardo_2020, me}, but in many cases detecting stable and semistable points in the GIT quotient remains a challenge. Recent results \cite{zanardini2021stability, zanardini-hattori} have related stability to the study of log canonical thresholds. In this note we expand on the computational setting presented in \cite{me}, and the techniques of \cite{zanardini2021stability}, to study a link between the GIT stability of complete intersections of same degree hypersurfaces in the same ambient projective space, which can be parametrised as tuples in a Grassmanian scheme, and the log canonical thresholds of hypersurfaces in these tuples. We also extend this analysis to Variations of GIT quotients, where we consider tuples of complete intersections and distinct hyperplanes, and obtain a link between VGIT stability, different polarisations and log canonical thresholds.

In more detail, working over $\mathbb{C}$, we consider complete intersections $S = \{f_1=0\}\cap\dots\cap \{f_k=0\}$, where each $f_i$ is a degree $d$ polynomial in $\mathbb{P}^n$. The complete intersection $S$ can be thought of as the base locus of the tuple $\mathcal{T} = \{\sum_{i=1}^kz_if_i|(z_1\colon\dots\colon z_k)\in \mathbb{P}^{k-1}\}$, which is the linear system of the defining polynomials. Let $\mathcal{R}_{n,d,k}$ be the parametrising space of all such $k$-tuples.

For a hypersurface $F = \{f=0\}$ the log canonical threshold is the number 
$$\mathrm{lct}(\mathbb{P}^n, F ) = \sup\{t \in \mathbb{Q}  | (\mathbb{P}^n, tF )\text{ is log canonical at p}\}.$$
The main aim of this paper is to establish a link between the stability of tuples $\mathcal{T}$, and the log canonical threshold $\mathrm{lct}(\mathbb{P}^n, F )$ of hypersurfaces $F\in \mathcal{T}$.

Given this setting, our first main result is the following.

\begin{theorem}[see Corollary \ref{if_ss_lct_condition}]
If $\mathcal{T}$ is (semi-)stable, then
for any hypersurface $F =\{f=0\} \in\mathcal{T}$ and any base point $p$ of $\mathcal{T}$ 
$$\frac{n+1}{kd}< \mathrm{lct}_p(\mathbb{P}^n, F) \text{ (respectively } \leq).$$

\end{theorem}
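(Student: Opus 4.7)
We prove the contrapositive. Suppose there exist $F = \{f = 0\} \in \mathcal{T}$ and a base point $p$ of $\mathcal{T}$ with $\mathrm{lct}_p(\mathbb{P}^n, F) \leq \tfrac{n+1}{kd}$ (respectively $<$). We will exhibit a one-parameter subgroup $\lambda \subset \mathrm{SL}_{n+1}$ with Hilbert--Mumford weight $\mu(\mathcal{T},\lambda) \geq 0$ (resp.\ $>0$), contradicting (semi-)stability via the numerical criterion. Since stability is $\mathrm{SL}_{n+1}$-invariant, we change coordinates so that $p = [1:0:\cdots:0]$ and work in the affine chart $y_i = x_i/x_0$.

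The hypothesis produces a divisorial valuation $E$ over $\mathbb{P}^n$ centered at $p$ with $(a_E+1)/\mathrm{ord}_E(f) \leq \tfrac{n+1}{kd}$ (resp.\ $<$). After a generic linear change of coordinates preserving $p$---so that the local log canonical threshold is computed by a toric valuation---we take $E$ to be the exceptional divisor of a weighted blow-up with weights $w = (w_1,\dots,w_n) \in \mathbb{Q}_{>0}^n$. Setting $W = \sum_i w_i$, one has $a_E + 1 = W$ and $\mathrm{ord}_E(f) = \nu_w(f)$, the minimum $w$-weighted degree of a monomial of $f$ in local coordinates, so $\nu_w(f) \geq \tfrac{kdW}{n+1}$ (resp.\ $>$). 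Extending by $w_0 = 0$, consider the one-parameter subgroup $\lambda_w \subset \mathrm{SL}_{n+1}$ with diagonal weights $a_i = w_i - W/(n+1)$; a direct computation shows that the minimum $\lambda_w$-weight of a monomial appearing in any $g \in V := \mathrm{span}\{f_1,\dots,f_k\}$ equals $\nu_w(g) - dW/(n+1)$.

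Via the Plücker embedding of $\mathrm{Gr}(k, H^0(\mathcal{O}_{\mathbb{P}^n}(d)))$, the Hilbert--Mumford weight of $\mathcal{T}$ along $\lambda_w$ is read off from the $\lambda_w$-weight filtration of $V$: writing $\nu_1 \leq \cdots \leq \nu_k$ for the $w$-weighted multiplicities at $p$ of a basis of $V$ adapted to this filtration, one has $\mu(\mathcal{T}, \lambda_w) = \sum_{j=1}^k \nu_j - \tfrac{kdW}{n+1}$. Since $p$ is a base point, every element of $V$ vanishes at $p$, so each $\nu_j > 0$; since $f \in V$, its weighted multiplicity coincides with one of the filtration jumps, whence $\nu_w(f) \leq \nu_k \leq \sum_j \nu_j$. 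Combining with $\nu_w(f) \geq \tfrac{kdW}{n+1}$ (resp.\ $>$), this yields $\mu(\mathcal{T}, \lambda_w) \geq 0$ (resp.\ $>0$), the desired contradiction. The most delicate step is replacing an arbitrary divisorial valuation computing $\mathrm{lct}_p$ by a monomial one, since in fixed coordinates $\mathrm{lct}_p$ is only bounded above by $W/\nu_w(f)$; this is handled by a generic coordinate change preserving $p$ together with a standard monomialization result, and constitutes the principal technical obstacle.
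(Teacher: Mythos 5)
Your proposal argues by contraposition: from a violation of the lct bound you try to manufacture a destabilising one-parameter subgroup. The paper goes the other way round: it fixes an arbitrary normalised one-parameter subgroup $\lambda$, proves (Theorem \ref{zan_lct_analog}) the valuation-theoretic lower bound $\frac{\sum(a_i-a_n)}{\omega(\mathcal{T},\lambda)}\leq \mathrm{lct}_p(\mathbb{P}^n,F)$ via the weighted blow-up attached to $\lambda$ together with the comparison $\omega(f,\lambda)\leq\omega(\mathcal{T},\lambda)$ of Proposition \ref{contr crit}, and then feeds in the semistability inequality of Lemma \ref{hilbert-mumford-interms of omega}. Your Pl\"ucker-weight computation ($\mu(\mathcal{T},\lambda_w)=\sum_j\nu_j-\tfrac{kdW}{n+1}$ via an adapted basis, and $\nu_w(f)\leq\nu_k\leq\sum_j\nu_j$) is correct and is essentially Proposition \ref{equality of omegas} in disguise, so the GIT half of your argument is fine.

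The gap is exactly where you flag it, and it is fatal as written: the claim that after a \emph{generic linear} change of coordinates preserving $p$ the local log canonical threshold is computed by a toric (monomial) valuation is false, not a standard monomialization result. What is true unconditionally is only the upper bound $\mathrm{lct}_p(f)\leq\frac{\sum_iw_i}{\nu_w(f)}$ for every choice of linear coordinates and weights; the infimum of the right-hand side over linear coordinates can be strictly larger than the lct. For instance, for the germ $f=(x+y^2)^2+y^5$ one has $\mathrm{lct}_0(f)=\tfrac{7}{10}$ (computed in the analytic coordinates $u=x+y^2$, $v=y$), while in every linear coordinate system the best monomial bound is $\tfrac{3}{4}$ (the quadratic part stays a rank-one square, so the Newton polygon never improves past the $(2,1)$-weighting). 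Passing to analytic or formal coordinates repairs the valuation theory but destroys the construction of $\lambda_w$, since one-parameter subgroups of $\mathrm{SL}_{n+1}$ only realise linear coordinate changes. A symptom that something is wrong: your argument uses the base-point hypothesis only to get $\nu_j>0$, whereas $\nu_j\geq0$ is automatic and already suffices for your chain of inequalities; so your proof, if valid, would establish the lct bound at \emph{every} point of every member of $\mathcal{T}$, which is stronger than the theorem and not what the base-point hypothesis is there to deliver. To close the gap you would need a genuine lower-bound mechanism for the lct (as in the paper's Theorem \ref{zan_lct_analog}, or the arguments of Hattori--Zanardini), not just the upper bounds coming from monomial valuations.
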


We also establish a method to detect the stability of the tuple $\mathcal{T}$ using the log canonical threshold of each hypersurface in $\mathcal{T}$.

\begin{theorem}[see Corollary \ref{zanardini corollary to go in intro}]
    If a tuple $\mathcal{T}\in \mathcal{R}_{n,d,k}$ is such that $\mathrm{lct}(\mathbb{P}^n,F)\geq \frac{n+1}{d}$ (respectively, $>\frac{n+1}{d}$)
for any hypersurface $F = \{f=0\}$ in $\mathcal{T}$, then $\mathcal{T}$ is semistable (respectively, stable).
\end{theorem}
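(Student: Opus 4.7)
The plan is to verify the Hilbert--Mumford numerical criterion for $\mathcal{T}$ as a point of the Grassmannian parametrising $\mathcal{R}_{n,d,k}$, by decomposing the weight of a $1$-PS on the Plücker coordinate of $\mathcal{T}$ into a sum of weights of individual hypersurfaces in $\mathcal{T}$, each of which is then controlled by a Tian-type bound linking $\mathrm{lct}$ to semistability.

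First I would fix an arbitrary $1$-PS $\lambda \subset \mathrm{SL}(n+1)$. After conjugation we may diagonalise $\lambda$ on $\mathbb{P}^n$ with weights $a_0 \geq \cdots \geq a_n$ summing to zero; the induced action on $H^0(\mathbb{P}^n, \mathcal{O}(d))$ is then diagonal in the monomial basis. To compute $\mu(\mathcal{T}, \lambda)$ at the Plücker level, I would use the weight-filtration characterisation: for each integer $w$, let $V_{\leq w} \subset H^0(\mathcal{O}(d))$ be the span of monomials of $\lambda$-weight $\leq w$, and set $k_w = \dim(\mathcal{T} \cap V_{\leq w}) - \dim(\mathcal{T} \cap V_{<w})$; then $\mu(\mathcal{T}, \lambda) = -\sum_w w \cdot k_w$. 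Equivalently, there is a basis $f_1, \ldots, f_k$ of $\mathcal{T}$ adapted to this filtration whose associated graded pieces have weights $w_1 \leq \cdots \leq w_k$, and $\mu(\mathcal{T}, \lambda) = -\sum_i w_i$. Each $f_i$ defines a hypersurface $F_i \in \mathcal{T}$, so the lct hypothesis applies to it.

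Second, I would invoke the single-hypersurface Tian-type criterion (either the classical version or, following \cite{zanardini2021stability}, the variant proved earlier in the paper en route to Corollary \ref{if_ss_lct_condition}): for a degree-$d$ hypersurface $F = \{f = 0\} \subset \mathbb{P}^n$, the inequality $\mathrm{lct}(\mathbb{P}^n, F) \geq (n+1)/d$ (resp.\ $>$) forces $F$ to be $\mathrm{SL}(n+1)$-semistable (resp.\ stable), i.e.\ $\mu(f, \lambda) \leq 0$ (resp.\ $<0$) for every $1$-PS $\lambda$. Applying this to each $F_i$ and summing yields $\mu(\mathcal{T}, \lambda) = -\sum_i w_i \geq 0$ (resp.\ $>0$); since $\lambda$ was arbitrary, $\mathcal{T}$ is semistable (resp.\ stable) by Hilbert--Mumford.

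The main obstacle I anticipate is identifying the graded weight $w_i$ with the actual $\lambda$-weight $\mu(f_i, \lambda)$ of a lifted hypersurface $F_i \in \mathcal{T}$: a lift may contain lower-weight terms, and the HM weight of a polynomial is typically the \emph{minimum} weight over its monomials, not the weight of a chosen graded piece. The cleanest resolution is to pass to the flat limit $\mathcal{T}_0 = \lim_{t \to 0} \lambda(t) \cdot \mathcal{T}$, which is $\lambda$-invariant, shares the Hilbert--Mumford weight of $\mathcal{T}$, and admits the $f_i$ genuinely as $\lambda$-eigenvectors; the lct hypothesis transfers to $\mathcal{T}_0$ via lower semicontinuity of log canonical thresholds in flat families. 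The strict-inequality case for stability is similar, provided one checks that the Tian bound becomes strict under $\mathrm{lct}(\mathbb{P}^n, F) > (n+1)/d$, which is standard.
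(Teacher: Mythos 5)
Your argument is in substance the same as the paper's: the paper proves the exact decomposition $\omega(\mathcal{T},\lambda)=\omega(f,\lambda)+\sum_i\omega(g_i,\lambda)$ for an adapted basis (Proposition \ref{equality of omegas}), bounds each member's affine weight by $1/\mathrm{lct}$ via Koll\'ar's inequality (Proposition \ref{rel wrt lct}), and concludes by the reformulated Hilbert--Mumford criterion (Lemma \ref{hilbert-mumford-interms of omega}); your ``sum over an adapted basis, then apply the Tian-type bound to each member'' is the same computation with the sum in place of $k$ times the maximum. One caveat: the obstacle you anticipate at the end is illusory, and your proposed fix is the one step that would not survive scrutiny. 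Because the Hilbert--Mumford weight here is the \emph{minimum} over the support, an adapted basis element chosen so that its minimal monomial lies outside the supports of the previously chosen elements (as in Proposition \ref{equality of omegas}) has graded weight equal to its actual weight -- extra higher-weight terms in a lift are harmless -- so no passage to the flat limit is needed. Moreover, the flat-limit route as you state it fails on its own terms: lower semicontinuity of the log canonical threshold means members of $\mathcal{T}_0=\lim_{s\to 0}\lambda(s)\cdot\mathcal{T}$ may have \emph{strictly smaller} lct than those of $\mathcal{T}$, so the hypothesis $\mathrm{lct}(\mathbb{P}^n,F)\geq (n+1)/d$ does not transfer to $\mathcal{T}_0$. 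Dropping that detour (and fixing the direction of the weight filtration, which as written computes the maximal rather than the minimal Pl\"ucker weight) leaves a correct proof identical in spirit to the paper's.
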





\begin{remark}
    While preparing this note, we were made aware that the authors of \cite{zanardini-hattori} had achieved similar results to the above Theorems independently. In their analysis, they study not just complete intersections but also linear systems and achieve a stronger if and only if result on the GIT stability and log canonical thresholds \cite[Theorem 1.1]{zanardini-hattori}. 

    They also further demonstrate the applications of their results, by providing explicit examples. In particular, they are able to recover results by Miranda on pencils of plane cubics \cite[\S 4.1]{miranda_1980}, and Wall on nets of conics \cite[\S 5]{wall_1983} by using their main Theorems. We would like to thank the authors of \cite{zanardini-hattori} for both pointing this out, and for very helpful suggestions in improving this manuscript.

    We should note that in \cite{zanardini-hattori}, the case of VGIT is not considered, and as such the results of Section \ref{VGIT section} are new. 
\end{remark}

Considering tuples $(S,H_1,\dots,H_m)$ where $S$ is as before and the $H_i$ are $m$ distinct hyperplanes, the GIT stability conditions change, and they depend on the choice of linearisation. By \cite{dolgachev_1994} and \cite{thaddeus_1996} there exist a finite number of stability conditions. In this Variations of GIT setting, we show the following.

\begin{theorem}[see Corollary \ref{vgit_corollary_zan}]
    If a tuple $(\mathcal{T},H_1\dots,H_m)\in \mathcal{R}_{m}$ is such that $\mathrm{lct}(\mathbb{P}^n,F)\geq \frac{k(n+1)}{kd-n\sum_{i=1}^mt_i}$ (respectively, $>\frac{k(n+1)}{kd-n\sum_{i=1}^mt_i}$)
for any hypersurface $f$ in $\mathcal{T}$, then $(\mathcal{T},H_1\dots,H_m)\in \mathcal{R}_{m}$ is $\vec{t}-$semistable (respectively, $\vec{t}-$stable).
\end{theorem}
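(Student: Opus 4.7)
The plan is to adapt the contrapositive argument of Corollary \ref{zanardini corollary to go in intro} to the VGIT setting, carefully tracking the extra contributions of the hyperplanes to the Hilbert--Mumford weight. I would proceed by contradiction, assuming that $(\mathcal{T}, H_1, \ldots, H_m)$ is not $\vec{t}$-semistable (respectively, not $\vec{t}$-stable) and then producing a hypersurface $F = \{f = 0\} \in \mathcal{T}$ whose log canonical threshold violates the given lower bound.

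By the Hilbert--Mumford numerical criterion applied to the linearisation $L_{\vec{t}}$ on $\mathcal{R}_m$, failure of $\vec{t}$-(semi)stability produces a non-trivial one-parameter subgroup $\lambda \subset SL_{n+1}$. After a suitable change of coordinates $(x_0,\ldots,x_n)$, I may assume $\lambda(s) = \mathrm{diag}(s^{a_0},\ldots,s^{a_n})$ with $a_0 \geq \cdots \geq a_n$ and $\sum_i a_i = 0$. The total weight splits naturally as
\[
\mu^{\vec{t}}\bigl((\mathcal{T}, H_1, \ldots, H_m), \lambda\bigr) \;=\; \mu(\mathcal{T}, \lambda) \;+\; \sum_{i=1}^{m} t_i \, \mu(H_i, \lambda),
\]
where $\mu(\mathcal{T},\lambda)$ is the Plücker weight of $\mathcal{T}$ inside the Grassmannian parametrising $\mathcal{R}_{n,d,k}$ (already analysed in the proof of Corollary \ref{zanardini corollary to go in intro}) and $\mu(H_i,\lambda)$ is the usual linear weight of $H_i$ under $\lambda$.

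Next I would extract a well-chosen element $f \in \mathcal{T}$: by the definition of the Plücker weight, one can find a basis of $\mathcal{T}$ whose $\lambda$-weights sum to $\mu(\mathcal{T}, \lambda)$, and take $f$ to be the basis element of smallest such weight. The destabilizing inequality, combined with the bound on $\sum t_i \mu(H_i,\lambda)$ coming from the coordinate description of the hyperplanes, then forces lower bounds on the orders of vanishing of $F$ along the flag of coordinate subspaces determined by $\lambda$, localised at a base point $p$ of $\mathcal{T}$ fixed by $\lambda$. Applying the multiplicity-to-lct estimate used in the proofs of \cite{zanardini2021stability} and the non-VGIT version of this paper yields $\mathrm{lct}_p(\mathbb{P}^n, F) < \frac{k(n+1)}{kd - n\sum_i t_i}$ (respectively, $\leq$), producing the desired contradiction.

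The hard part is the weight bookkeeping. In the non-VGIT case the factor $k$ cancels between the numerator $k(n+1)$ and denominator $kd$, leaving the clean threshold $(n+1)/d$; here the corrections $t_i \mu(H_i,\lambda)$ must be bounded in terms of $\sum a_i$ (and of a single coordinate weight, say $a_n$) so that the term $-n\sum t_i$ appears precisely in the denominator. One must also ensure that the destabilizing 1-PS can be arranged to fix a common base point of $\mathcal{T}$, so that the local lct estimate applies, and that the Plücker minimiser of $\mu(\mathcal{T},\lambda)$ is compatible with the worst case for the hyperplane weights; these are the two points where the adaptation of Corollary \ref{zanardini corollary to go in intro} requires genuine new input rather than a direct citation.
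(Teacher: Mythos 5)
Your overall strategy --- contrapose, split the weight as $\mu(\mathcal{T},\lambda)+\sum t_i\mu(H_i,\lambda)$, extract a single member of $\mathcal{T}$ carrying at least a $1/k$ share of the Grassmannian weight, and convert that weight into an upper bound on the log canonical threshold --- is the paper's argument run backwards, and in principle it works. The paper argues directly: Proposition \ref{rel wrt lct vgit} (which is Koll\'ar's weighted-multiplicity bound $\omega(f,\lambda)/(\sum a_k-na_n)\le 1/\mathrm{lct}(\mathbb{P}^n,F)$ from Proposition \ref{rel wrt lct}, combined with Corollary \ref{max cond} and the bound $\omega(H_i,\lambda)\le \sum a_k-na_n$ coming from $\mathrm{lct}(\mathbb{P}^n,H_i)=1$) gives, for every $\lambda$, some $F\in\mathcal{T}$ with
\[
\frac{\omega(\mathcal{T},\lambda)+\sum_{i=1}^m t_i\omega(H_i,\lambda)}{\sum_{k=0}^{n-1}a_k-na_n}\;\le\;\frac{k}{\mathrm{lct}(\mathbb{P}^n,F)}+\sum_{i=1}^m t_i\;\le\;\frac{kd-n\sum t_i}{n+1}+\sum t_i\;=\;\frac{kd+\sum t_i}{n+1},
\]
which is exactly the semistability threshold of Lemma \ref{vgit hilbert}.

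The genuine gap is that the two points you flag as ``requiring genuine new input'' are precisely where your write-up stops, and one of them rests on a misunderstanding. There is no need to arrange for the destabilising one-parameter subgroup to fix a base point of $\mathcal{T}$: base points are relevant only to the converse direction (Theorem \ref{zan_lct_analog} and Corollary \ref{if_ss_lct_condition}). In this direction the lct estimate is applied to the single hypersurface $F$ at the centre $(0:\cdots:0:1)$ of the flag of the destabilising $\lambda$, whatever that point is, and $\mathrm{lct}(\mathbb{P}^n,F)\le\mathrm{lct}_p(\mathbb{P}^n,F)$ automatically. Likewise there is no compatibility issue between the Pl\"ucker minimiser and the hyperplane weights, because each $\omega(H_i,\lambda)$ is bounded by $\sum a_k-na_n$ independently of $\mathcal{T}$; that single bound is what produces the $-n\sum t_i$ in the denominator once you clear the factor $(n+1)$. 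Finally, the member you must extract is the one of \emph{largest} affine weight, so that $\omega(f,\lambda)\ge\omega(\mathcal{T},\lambda)/k$ by Corollary \ref{max cond} --- i.e.\ the most singular member; ``the basis element of smallest weight'' reads as the opposite under the paper's conventions and should be made precise. With these corrections the bookkeeping closes in a few lines; as written, the proposal asserts the conclusion of that bookkeeping without performing it.
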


The organisation of the paper is as follows. In Section \ref{VGIT_prelims} we make a quick summary of the relevant GIT setting. In Section \ref{git_lc_sect} we demonstrate how the Hilbert-Mumford numerical criterion allows us to establish a link between stability and the log canonical threshold. In Section \ref{VGIT section} we establish a similar link between VGIT stability and the log canonical threshold.

\renewcommand{\abstractname}{Acknowledgements}
\begin{abstract}
I would like to thank my PhD supervisor, Jesus Martinez-Garcia for his useful comments and suggestions in improving this draft. 
This paper is part of my PhD thesis at the University of Essex, funded by a scholarship of the Department of Mathematical Sciences. Parts of this paper were finished during my postdoctoral position at the University of Glasgow, funded by Ruadha´ıDervan’s Royal Society University Research Fellowship. I would also like to thank him for helpful suggestions. I would also like to thank Masafumi Hattori and Aline Zanardini for very helpful suggestions in improving this manuscript. I would also like to thank my PhD defence examiners, Chenyang Xu and Gerald Williams, for many useful comments and suggestions in improving this draft. 
\end{abstract}

\section{Preliminaries in GIT}\label{VGIT_prelims}
Throughout this Section, we will work over an algebraically closed field $k$. 
Let $G\coloneqq \operatorname{SL}(n+1)$. Consider a variety $S$ which is the complete intersection of $k$ hypersurfaces of degree $d$ in $\mathbb{P}^n$, i.e, $S = \{f_1 = f_2 = \dots = f_k=0\}$, where each $f_i(x) = \sum f_{I_i}x^{I_i} $ with $I_i = \{d_{i,0}, \dots, d_{i,n} \}$, $\sum_{j=0}^n d_{i,j}=d$ for all $i$. Here, $x^{I_i} = x_0^{d_{i,0}} x_1^{d_{i,1}}\dots x_n^{d_{i,n}}$. Also consider $H_1,\dots, H_m$, $m$ distinct hyperplanes with defining polynomials $h_i(x) = \sum h_{i,j}x_j$.  Let $\Xi_d$ be the set of monomials of degree $d$ in variables $x_0, \dots, x_n$, written in the vector notation $I_i =(d_{i,0},\dots, d_{i,n})$. As in Gallardo --Martinez-Garcia \cite{gallardo_martinez-garcia_2018, zhang_2018} and \cite{me} we define the associated set of monomials $$\operatorname{Supp}(f_i)= \{x^{I_i} \in \Xi_d|f_{I_i} \neq 0\},\quad \operatorname{Supp}(h_i)= \{x_j \in \Xi_1|h_{i,j} \neq 0\}.$$

Let $V \vcentcolon= \operatorname{H}^0(\mathbb{P}^n, \mathcal{O}_{\mathbb{P}^n}(1))$ and $W \vcentcolon= \operatorname{H}^0(\mathbb{P}^n, \mathcal{O}_{\mathbb{P}^n}(d))  \simeq \operatorname{Sym}^d V $ be the vector space of degree $d$ forms.  For an embedded variety $S = (f_1,\dots, f_k) \subseteq \mathbb{P}^n $ we associate its Hilbert point

$$[S] = [f_1 \wedge \dots \wedge f_k] \in \operatorname{Gr}(k, W) \subset \mathbb{P} \bigwedge^k W.$$
Note that $\operatorname{Gr}(k, W)$ is embedded in $\mathbb{P} \bigwedge^k W$ via the Pl{\"u}cker embedding $(w_1,\dots, w_r)\rightarrow [w_1\wedge\dots\wedge w_r]$, where the $w_r$ are the basis vectors of $W$.  We denote by $[\overline{S}] \vcentcolon= f_1 \wedge \dots \wedge f_k$ some lift in $\bigwedge^k W$. We will consider the natural $G$ action, given by $A\cdot f(x) = f(Ax)$ for $A\in G$.

For simplicity, we will denote $\operatorname{Gr}(k,W)$ by $\mathcal{R}_{n,d,k}$, and we let $\mathcal{R}_m \vcentcolon= \mathcal{R}_{n,d,k}\times \Big(\mathcal{R}_{n,1,1}\Big)^m$ be the parameter scheme of tuples $(f_1,\dots f_k, h_1, \dots, h_m)$, under the identification
$(f_1,\dots f_k)=c(f_1,\dots f_k)$ and $h_i=c_ih_i$ for $c,c_i\in \mathbb{G}_m$, where:
$$\mathcal{R}_{m} \cong \operatorname{Gr}\Big(k,{\binom{n+d}{d}}\Big) \times \Big(\mathbb{P}(H^0(\mathbb{P}^n, \mathcal{O}_{\mathbb{P}^n}(1)))\Big)^m \hookrightarrow \mathbb{P}\bigwedge^k W \times (\mathbb{P}^n)^m.$$
In the case where $m = 1$, we will just write $\mathcal{R} = \mathcal{R}_1$. There is a natural $G$ action on $V$ and $W$ given by the action of $G$ on $\mathbb{P}^n$. This action induces an action of $G$ on $\mathcal{R}_{n,d,k}$ via the natural maps, and by the inclusion map to the Pl{\"u}cker embedding $\mathbb{P}\bigwedge^k W$. By extension, we also obtain an induced action of $G$ to $\mathcal{R}$. We aim to study the GIT quotients $\mathcal{R}_m\sslash G$.

Let $\mathcal{C}\coloneqq\mathbb{P}(W)$. We will begin our analysis by first studying the GIT quotients $\mathcal{C}\sslash G$. $\mathcal{C}$ parametrises hypersurfaces $X = \{f=0\}$ of degree $d$ in $\mathbb{P}^n$, where $f = \sum f_Ix^I$ is a polynomial of degree $d$.  We must study the Hilbert-Mumford numerical criterion in order to study this quotient. In order to do so, we fix a maximal torus $T\cong (\mathbb{G}_m)^n \subset G$ which in turn induces lattices of characters $M = \operatorname{Hom}_{\mathbb{Z}}(T, \mathbb{G}_m) \cong \mathbb{Z}^{n+1}$ and one-parameter subgroups $N = \operatorname{Hom}_{\mathbb{Z}}( \mathbb{G}_m, T) \cong \mathbb{Z}^{n+1}$ with natural pairing 
$$\langle - , -\rangle \colon M\times N \rightarrow \operatorname{Hom}_{\mathbb{Z}}( \mathbb{G}_m, \mathbb{G}_m) \cong \mathbb{Z}$$
given by the composition $(\chi,\lambda) \mapsto \chi \circ \lambda$. We also choose projective coordinates $(x_0 \colon \dots \colon x_n)$ such that the maximal torus $T$ is diagonal in $G$. Given a one-parameter subgroup $\lambda \colon \mathbb{G}_m \rightarrow T \subset SL(n+1)$ we say $\lambda$ is \textit{normalised} if 
$$\lambda(s) = \operatorname{Diag} (s^{\mu_0}, \dots, s^{\mu_n})$$ where $\mu_0 \geq \dots \geq \mu_n$ with $\sum \mu_i = 0$ (implying $\mu_0 >0, \mu_n <0$ if $\lambda$ is not trivial).

From \cite[Lemma 3.8]{me} with $m=1$, we can choose an ample $G$-linearisation $\mathcal{L} = \mathcal{O}_{\mathcal{C}}(1)$. Hence, the set of characters, with respect to this $G$-linearisation corresponds to degree $d$ polynomials $f$, as sections $s\in H^0(\mathcal{C}, \mathcal{O}_{\mathcal{C}}(1))$ are polynomials of degree $d$. In particular, for a monomial $x^I$ of degree $d$ and a normalised one-parameter subgroup $\lambda$ as above, the natural pairing is given by $\langle x^I, \lambda \rangle = \sum_{i=0}^n d_i\mu_i$. Note, that in many cases we will abuse the notation and write $\langle I, \lambda \rangle$ instead of $\langle x^I, \lambda \rangle$. Then, the $G$-action induced by $\lambda$ on a monomial $x^I$ is given by $\lambda(s)\cdot x^I = s^{\langle I, \lambda \rangle}x^I$. Naturally, the $G$-action induced by $\lambda$ on the polynomial $f$ is given by 
$$\lambda(s)\cdot f = \sum_{I\in \operatorname{Supp}(f)} s^{\langle I, \lambda \rangle}f_I x^I.$$
In addition, notice that the action of $\lambda$ on a fiber is equivalent to the action of $\lambda$ on the polynomial $f$, and by the above discussion, we have that $\operatorname{weight}(f,\lambda) = \min_{x^I\in \operatorname{Supp}(f)}\{\langle I, \lambda \rangle\}$. Thus the Hilbert-Mumford function reads
$$\mu(f,\lambda) = - \min_{x^I\in \operatorname{Supp}(f)}\{\langle I, \lambda \rangle\},$$
as in \cite[\S 3.1]{me} and the Hilbert-Mumford numerical criterion is:
\begin{lemma}[{\cite[Lemma 3.5]{me}}]\label{HM criterion for hypersurfaces}
With respect to a maximal torus $T$:
\begin{enumerate}
    \item $X = \{f=0\}$ is \emph{semi-stable} if and only if $\mu(f,\lambda) \geq 0$, i.e. if $\min_{x^I\in \operatorname{Supp}(f)}\{\langle I, \lambda \rangle\}\leq 0$, for all non-trivial one-parameter subgroups  $\lambda$ of $T$.
    \item $X = \{f=0\}$ is \emph{stable} if and only if $\mu(f, \lambda) > 0$, i.e. if $\min_{x^I\in \operatorname{Supp}(f)}\{\langle I, \lambda \rangle\}< 0$, for all non-trivial one-parameter subgroups  $\lambda$ of $T$.
\end{enumerate}
\end{lemma}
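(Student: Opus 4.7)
The plan is to derive the stated criterion from Mumford's general Hilbert--Mumford numerical criterion by carrying out the explicit weight computation for the $G$-action on $\mathcal{C} = \mathbb{P}(W)$ and then restricting to one-parameter subgroups of a chosen maximal torus.

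First I would recall the general criterion: for the reductive group $G$ acting on a projective variety $X$ with a $G$-linearised ample line bundle $\mathcal{L}$, a point $x \in X$ is semistable (respectively, stable) if and only if $\mu(x, \lambda) \geq 0$ (respectively, $>0$) for every non-trivial 1-PS $\lambda \colon \mathbb{G}_m \to G$, where $\mu(x, \lambda)$ is defined as minus the weight of the induced $\mathbb{G}_m$-action on the fibre of $\mathcal{L}$ above the limit point $\lim_{s\to 0}\lambda(s)\cdot x$.

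Next I would specialise to the situation at hand: $X = \mathcal{C} = \mathbb{P}(W)$, $\mathcal{L} = \mathcal{O}_{\mathcal{C}}(1)$, and the $G$-action on $W = \operatorname{Sym}^d V$ is the one induced from the natural action on $V$. Fixing coordinates so that $T$ is diagonal and taking a normalised 1-PS $\lambda(s) = \operatorname{diag}(s^{\mu_0}, \dots, s^{\mu_n})$, the defining formula $\lambda(s)\cdot f = \sum_{I\in \operatorname{Supp}(f)} s^{\langle I, \lambda\rangle} f_I x^I$ recorded in the excerpt tells us exactly how $\lambda$ acts on the affine cone over $\mathcal{C}$. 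The weight of the $\mathbb{G}_m$-action on the fibre of $\mathcal{O}_{\mathcal{C}}(1)$ over the limit point is then $\min_{I\in \operatorname{Supp}(f)}\langle I, \lambda\rangle$, which yields the identity $\mu(f, \lambda) = -\min_{I\in \operatorname{Supp}(f)}\langle I, \lambda\rangle$ used in the lemma. Substituting this identity into the general criterion produces both equivalences as stated.

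The remaining step is to justify restricting attention to 1-PS of the single maximal torus $T$. Every 1-PS of $G = \operatorname{SL}(n+1)$ is conjugate to one of $T$, and since $\mu(g\cdot f, \lambda) = \mu(f, g^{-1}\lambda g)$ while (semi)stability is $G$-invariant, checking all 1-PS of $G$ is equivalent to checking all 1-PS of $T$ on each $G$-translate of $f$; modulo the Weyl group action these 1-PS may further be assumed normalised. The only point that requires genuine care is tracking the sign and the limit convention so that the identification $\mu(f, \lambda) = -\min_I \langle I, \lambda\rangle$ is unambiguous; once this convention is fixed, every other step reduces to bookkeeping on monomial supports.
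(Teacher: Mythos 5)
Your derivation is correct and is the standard one: the paper does not actually prove this lemma here (it is quoted from \cite[Lemma 3.5]{me}), and your route — Mumford's general numerical criterion, the explicit weight computation on the fibre of $\mathcal{O}_{\mathcal{C}}(1)$ over the limit point giving $\mu(f,\lambda)=-\min_{x^I\in\operatorname{Supp}(f)}\langle I,\lambda\rangle$, and the conjugacy reduction to a single maximal torus — is exactly how the cited result is obtained. You also correctly flag the one genuinely delicate point, namely that a criterion ``with respect to a fixed $T$'' is only equivalent to (semi)stability once one ranges over all $G$-translates of $f$ (equivalently, all coordinate choices diagonalising some maximal torus), which is implicit in the paper's phrasing.
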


\section{Stability of \texorpdfstring{$k$}{TEXT}-tuples of Hypersurfaces and Log Canonical Thresholds}\label{thresholds section}
In this Section we will study how we can use the Hilbert-Mumford numerical criterion, to study a connection between GIT and log canonical thresholds. We will also, study the above through the scope of VGIT. It serves as a generalisation of the results presented in Zanardini \cite{zanardini2021stability}, where the link between log canonical thresholds and the GIT stability of pencils of hypersurfaces are studied, with an emphasis on the pencils of curves. Here, we generalise this setting, considering complete intersections of  $k$ hypersurfaces of degree $d$ in $\mathbb{P}^n$, and hence, elements $\mathcal{T}\in \operatorname{Gr}\Big(k,{\binom{n+d}{d}}\Big) = \mathcal{R}_{n,d,k}$. 

\subsection{GIT and log Canonical Thresholds}\label{git_lc_sect}

As before, we fix a maximal torus $T$ of $G$, and a coordinate system such that $T$ is diagonal.
As in \cite[\S 4.1]{me}, the Hilbert-Mumford function is given by
\begin{equation*}
        \mu(f_1\wedge\dots \wedge f_k, \lambda)\vcentcolon= -\min \left\{\sum_{i=1}^k\langle I_i,\lambda\rangle  \text{ }\Big| (I_1,\dots,I_k)\in (\Xi_d)^k, I_i \neq I_j  \text{ if } i\neq j \text{ and }x^{I_i}\in \operatorname{Supp}(f_i)\right\},
\end{equation*}
where $\lambda$ is a normalised one-parameter subgroup.

Let $I_i = (d_{i,0}, \dots, d_{i,n})$ with $i = 1, \dots, k $, $\sum_{j=0}^nd_{i,j} = d$ be distinct monomials with $I_i \in \operatorname{Supp}(f_i)$. For any normalised one-parameter subgroup $\lambda(s) = \operatorname{Diag}(s^{a_0}, \dots, s^{a_n})$ and since $d_{i,n} = d - \sum_{j=0}^{n-1}d_{i,j}$ and $a_n  = -\sum_{k=0}^{n-1}a_k $ we have:

\begin{equation*}
    \begin{split}
        \mu(f_1\wedge\dots \wedge f_k, \lambda) &= -\min_{x^{I_i}\in\operatorname{Supp(f_i)}} \bigg\{\sum_{i=1}^k\langle I_i, \lambda \rangle \bigg| (1) \bigg\}\\
        & = -\min_{x^{I_i}\in\operatorname{Supp}(f_i)} \bigg\{ \sum_{i=1}^k \bigg(\sum_{j=0}^nd_{i,j}a_{j}\bigg)\bigg| (1) \bigg\}\\
        &= -\min_{x^{I_i}\in\operatorname{Supp}(f_i)} \bigg\{ \sum_{j=0}^n \bigg(\sum_{i=1}^k d_{i,j}a_{j}\bigg)\bigg| (1)\bigg\}\\
        &=-\min_{x^{I_i}\in\operatorname{Supp}(f_i)} \bigg\{ \sum_{j=0}^{n-1} \bigg(\sum_{i=1}^k d_{i,j}a_{j}\bigg) -\bigg(kd- \sum_{i=1}^k\sum_{j=0}^{n-1}d_{i,j}\bigg)a_n\bigg| (1)\bigg\}\\
        &=-\min_{x^{I_i}\in\operatorname{Supp}(f_i)} \bigg\{ \sum_{j=0}^{n-1} \bigg(\sum_{i=1}^k d_{i,j}(a_{j}-a_n)\bigg) -kda_n\bigg| (1)\bigg\}\\
        &=-\min_{x^{I_i}\in\operatorname{Supp}(f_i)} \bigg\{ \sum_{j=0}^{n-1} \bigg(\sum_{i=1}^k d_{i,j}(a_{j}-a_n)\bigg| (1)\bigg)\bigg\} +\frac{kd}{n+1}\bigg(\sum_{k=0}^{n-1}a_k - na_n\bigg)
    \end{split}
\end{equation*}
where condition $(1)$ refers to the condition $(I_1,\dots,I_k)\in (\Xi_d)^k, I_i \neq I_j  \text{ if } i\neq j \text{ and }x^{I_i}\in \operatorname{Supp}(f_i)$. 
Throughout, we will denote by $\mathcal{T}$ an element of the Grassmanian $\operatorname{Gr}(k,W)$. Recall, that $\mathcal{R}$ parametrises the space of tuples $\{\sum_{i=1}^kz_if_i|(z_1\colon\dots\colon z_k)\in \mathbb{P}^{k-1}\}$ of $k$ hypersurfaces, and as such, we can write $\mathcal{T} = \{\sum_{i=1}^kz_if_i|(z_1\colon\dots\colon z_k)\in \mathbb{P}^{k-1}\}$. We define the following.
\begin{definition}\label{omega for ktuples}
Fix a maximal torus $T$. For any $k$-tuple $\mathcal{T}\in \mathcal{R}_{n,d,k}$ and normalised one-parameter subgroup $\lambda$ we define the \emph{affine weight} of $\mathcal{T}$ as  

$$\omega(\mathcal{T},\lambda)\vcentcolon = \min_{x^{I_i}\in\operatorname{Supp(f_i)}} \bigg\{ \sum_{j=0}^{n-1} \bigg(\sum_{i=1}^k d_{i,j}(a_{j}-a_n)\bigg)\bigg\}.$$
\end{definition}

With this definition and the above discussion, we can reformulate the Hilbert-Mumford numerical criterion:

\begin{lemma}\label{hilbert-mumford-interms of omega}
With respect to a maximal torus $T$, $\mathcal{T}$ is unstable (respectively, non-stable) if for some normalised one-parameter subgroup $\lambda$
$$\omega(\mathcal{T},\lambda) > \frac{kd}{n+1}\bigg(\sum_{k=0}^{n-1}a_k - na_n\bigg) \text{ (resp. } \geq).$$
Similarly, $\mathcal{T}$ is (semi-)stable if for all normalized one-parameter subgroups $\lambda$

$$\omega(\mathcal{T},\lambda) < \frac{kd}{n+1}\bigg(\sum_{k=0}^{n-1}a_k - na_n\bigg) \text{ (resp. } \leq).$$
\end{lemma}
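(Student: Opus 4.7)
The plan is to obtain Lemma \ref{hilbert-mumford-interms of omega} as a direct reformulation of the Hilbert-Mumford numerical criterion for tuples, using the chain of equalities already established immediately above the statement.

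First I would recall that by the same argument as Lemma \ref{HM criterion for hypersurfaces}, extended to the Pl\"ucker embedding of the Grassmannian (as carried out in \cite{me}), the point $\mathcal{T}=[f_1\wedge\cdots\wedge f_k]$ is semistable (resp.\ stable) with respect to the chosen linearisation if and only if $\mu(f_1\wedge\cdots\wedge f_k,\lambda)\geq 0$ (resp.\ $>0$) for every non-trivial normalised one-parameter subgroup $\lambda$ of $T$, and unstable (resp.\ non-stable) if and only if there exists such a $\lambda$ with $\mu(f_1\wedge\cdots\wedge f_k,\lambda)<0$ (resp.\ $\leq 0$). This is the content of the Hilbert-Mumford criterion applied to the Pl\"ucker embedding.

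Next I would appeal to the explicit computation preceding the definition of $\omega$, which rewrites the Hilbert-Mumford function as
\[
\mu(f_1\wedge\cdots\wedge f_k,\lambda)=-\omega(\mathcal{T},\lambda)+\frac{kd}{n+1}\Bigl(\sum_{k=0}^{n-1}a_k-na_n\Bigr),
\]
after substituting $d_{i,n}=d-\sum_{j=0}^{n-1}d_{i,j}$ and $a_n=-\sum_{j=0}^{n-1}a_j$ and recognising the minimum defining $\omega(\mathcal{T},\lambda)$ in Definition \ref{omega for ktuples}. This identity is exactly the last line of the displayed calculation before the lemma, so nothing new needs to be computed here.

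Finally I would substitute this identity into each part of the Hilbert-Mumford criterion and rearrange. For instance, the existence of a $\lambda$ with $\mu<0$ (unstable) is equivalent to $\omega(\mathcal{T},\lambda)>\frac{kd}{n+1}\bigl(\sum_{k=0}^{n-1}a_k-na_n\bigr)$ for that $\lambda$; the non-strict version gives the non-stable case; and taking the logical negation over all $\lambda$ yields the (semi-)stable statements with the reversed inequality. Since the argument is purely a substitution plus sign tracking, there is no serious obstacle; the only point requiring minor care is to make sure the strict/non-strict dichotomy and the direction of the inequalities match the sign conventions of $\mu$ in Lemma \ref{HM criterion for hypersurfaces} (where semistability corresponds to $\mu\geq 0$, so $\omega$ must be bounded \emph{above}). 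Once this bookkeeping is done, both parts of the lemma follow immediately.
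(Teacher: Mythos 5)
Your proposal is correct and is exactly the argument the paper intends: the lemma is stated immediately after the displayed computation rewriting $\mu(f_1\wedge\cdots\wedge f_k,\lambda)=-\omega(\mathcal{T},\lambda)+\frac{kd}{n+1}\bigl(\sum_{k=0}^{n-1}a_k-na_n\bigr)$, and the paper offers no further proof beyond "the above discussion," which is precisely the substitution and sign-tracking you carry out. The strict/non-strict bookkeeping in your last paragraph matches the paper's conventions.
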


We can also define the affine weight of a hypersurface $f$, by following the same discussion for the case $k=1$.

\begin{definition}[{\cite[Definition 3.3]{zanardini2021stability}}]\label{affine weight}

Fix a maximal torus $T$. For a hypersurface $f$ of degree $d$ and normalised one-parameter subgroup $\lambda$ we define its \emph{affine weight} as 
$$\omega(f,\lambda) \vcentcolon= \min\bigg\{ \sum_{i=0}^{n-1}d_i(a_j-a_n)\Big| I = (d_1, \dots, d_n)\in \operatorname{Supp}(f_i) \bigg\}.$$
\end{definition}

This definition allows us to rewrite Lemma \ref{HM criterion for hypersurfaces} as follows.

\begin{proposition}\label{semistab for hypersurfaces}
With respect to a maximal torus $T$, $f$ is unstable (respectively, non-stable) if for some normalised one-parameter subgroup $\lambda$
$$\omega(f,\lambda) > \frac{d}{n+1}\bigg(\sum_{k=0}^{n-1}a_k - na_n\bigg) \text{ (resp. } \geq).$$

\end{proposition}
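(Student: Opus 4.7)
The plan is to specialise the affine-weight computation already carried out for $k$-tuples to the case $k=1$, and then invoke Lemma \ref{HM criterion for hypersurfaces}. The Hilbert--Mumford numerical criterion for a single hypersurface reads $\mu(f,\lambda) = -\min_{x^I\in\operatorname{Supp}(f)}\langle I,\lambda\rangle$, so the task reduces to rewriting this minimum in terms of $\omega(f,\lambda)$ and the trace-correction term on the right-hand side of the inequality.

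First I would take $I=(d_0,\dots,d_n)$ with $\sum_j d_j = d$ and a normalised one-parameter subgroup $\lambda(s)=\operatorname{Diag}(s^{a_0},\dots,s^{a_n})$ with $\sum_i a_i = 0$. Using $d_n = d - \sum_{j=0}^{n-1}d_j$, I would decompose
\[
\langle I,\lambda\rangle \;=\; \sum_{j=0}^{n} d_j a_j \;=\; \sum_{j=0}^{n-1} d_j(a_j - a_n) + d\,a_n.
\]
Taking $-\min$ over $x^I\in\operatorname{Supp}(f)$ and recalling Definition \ref{affine weight} yields
\[
\mu(f,\lambda) \;=\; -\omega(f,\lambda) \;-\; d\,a_n.
\]
Next I would use the trace-zero condition, which gives $\sum_{k=0}^{n-1} a_k = -a_n$, and therefore the algebraic identity
\[
-d\,a_n \;=\; \frac{d}{n+1}\bigl(-(n+1)a_n\bigr) \;=\; \frac{d}{n+1}\Bigl(\sum_{k=0}^{n-1} a_k - n a_n\Bigr).
\]
Substituting back produces the clean expression $\mu(f,\lambda) = -\omega(f,\lambda) + \tfrac{d}{n+1}\bigl(\sum_{k=0}^{n-1}a_k - na_n\bigr)$.

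To finish, I would apply Lemma \ref{HM criterion for hypersurfaces}: $f$ is unstable precisely when $\mu(f,\lambda)<0$ for some non-trivial normalised $\lambda$, and non-stable precisely when $\mu(f,\lambda)\leq 0$ for some such $\lambda$. Under the identity above, these two conditions translate verbatim into $\omega(f,\lambda) > \tfrac{d}{n+1}\bigl(\sum_{k=0}^{n-1}a_k - na_n\bigr)$ and its non-strict counterpart, which is the claim. There is really no hard step here; the only place one must be careful is in verifying the trace-correction identity, which relies essentially on the normalisation $\sum a_i = 0$ of the one-parameter subgroup, and in remembering that ``$f$ is unstable'' and ``$f$ is not semistable'' are synonymous so that the inequality must be reversed when passing from $\mu$ to $\omega$.
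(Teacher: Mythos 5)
Your proof is correct and follows essentially the same route as the paper, which obtains this Proposition by specialising the affine-weight decomposition of $\langle I,\lambda\rangle$ (already carried out for $k$-tuples) to $k=1$ and then negating Lemma \ref{HM criterion for hypersurfaces}; your identity $\mu(f,\lambda)=-\omega(f,\lambda)+\tfrac{d}{n+1}\bigl(\sum_{k=0}^{n-1}a_k-na_n\bigr)$ is exactly the content of that computation. The only cosmetic caveat is that the stated implication is one-directional, so your "precisely when" is slightly stronger than what is needed (and than what holds for a fixed torus when one restricts to normalised $\lambda$), but this does not affect the argument.
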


\begin{proposition}[Analogue of {\cite[Proposition 3.5]{zanardini2021stability}}]\label{contr crit}

Given a $k$-tuple $\mathcal{T}\in \mathcal{R}_{n,d,k}$ and $k$ distinct hypersurfaces $g_1, \dots, g_k \in \mathcal{T}$ we have

$$\omega(g_i,\lambda)\leq \sum_{i=1}^k \omega(g_i,\lambda)\leq \omega(\mathcal{T},\lambda)$$
for all one-parameter subgroups $\lambda$.
\end{proposition}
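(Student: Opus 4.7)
The plan is to prove the two inequalities separately, with the first being a simple non-negativity observation and the second being the substantive content.

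For the first inequality, I would show that each $\omega(g_i,\lambda)$ is non-negative. Indeed, since $\lambda$ is normalised, $a_0 \geq \cdots \geq a_n$, so $a_j - a_n \geq 0$ for every $j$, and since the exponents appearing in any monomial of $\operatorname{Supp}(g_i)$ are non-negative integers, every expression $\sum_{j=0}^{n-1} d_j(a_j - a_n)$ entering the definition of $\omega(g_i,\lambda)$ is non-negative, hence so is its minimum. Any single term of the sum $\sum_{\ell=1}^k \omega(g_\ell,\lambda)$ is therefore bounded above by the whole sum, which is the first inequality.

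For the second inequality, the essential point is that $\omega(\mathcal{T},\lambda)$ is an invariant of $\mathcal{T}$ (and $\lambda$), not of the particular choice of generators $f_1,\dots,f_k$. This will follow from Lemma \ref{hilbert-mumford-interms of omega}: the Hilbert--Mumford weight $\mu(\mathcal{T},\lambda)$ is computed intrinsically on the Plücker point $[\mathcal{T}] \in \mathbb{P}\bigwedge^k W$, while the derivation preceding Definition \ref{omega for ktuples} gives
$$\omega(\mathcal{T},\lambda) \;=\; -\mu(\mathcal{T},\lambda) + \frac{kd}{n+1}\Big(\sum_{k=0}^{n-1}a_k - na_n\Big),$$
so $\omega(\mathcal{T},\lambda)$ depends only on the span $\mathcal{T}$. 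Granted this, if the distinct hypersurfaces $g_1,\dots,g_k \in \mathcal{T}$ are linearly independent, they form a basis of $\mathcal{T}$, and I may compute $\omega(\mathcal{T},\lambda)$ with respect to this basis. Picking distinct exponent vectors $(J_1,\dots,J_k)$ with $x^{J_i}\in \operatorname{Supp}(g_i)$ realising the minimum, each partial sum $\sum_{j=0}^{n-1} d^{(J_i)}_j(a_j - a_n)$ is a particular value of the minimum defining $\omega(g_i,\lambda)$, hence is at least $\omega(g_i,\lambda)$; summing over $i$ then yields $\sum_i \omega(g_i,\lambda) \leq \omega(\mathcal{T},\lambda)$.

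The main obstacle I foresee is justifying the basis-independence of $\omega(\mathcal{T},\lambda)$: one must confirm that the paper's support-based formula for $\mu(f_1\wedge\cdots\wedge f_k,\lambda)$ agrees with the intrinsic Plücker-weight description, so that a change of basis $g_i = \sum_j M_{ij}f_j$ leaves $\omega$ unchanged. A secondary technicality is the case in which $g_1,\dots,g_k$ fail to be linearly independent (possible when $k\geq 3$): either the statement should be read as implicitly restricting to a generating set of $\mathcal{T}$, or one should reduce to a spanning subset and invoke non-negativity of the remaining $\omega(g_i,\lambda)$ to absorb the extra terms without breaking the inequality.
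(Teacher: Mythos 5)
Your proof follows essentially the same line as the paper's: the left inequality is the same non-negativity observation, and the right inequality comes from splitting the minimising tuple of distinct monomials defining $\omega(\mathcal{T},\lambda)$ into $k$ summands, each of which is a particular value of the minimum defining some $\omega(g_i,\lambda)$ and hence bounds it from below. Where you diverge is in making explicit a step the paper leaves silent: the paper computes $\omega(\mathcal{T},\lambda)$ with the fixed generators $f_i$ and then writes ``$\geq \omega(g_1,\lambda)+\dots+\omega(g_k,\lambda)$ since the $g_i$ are distinct,'' which only literally works if one may take the $g_i$ as the generators; your appeal to basis-independence of the Pl\"ucker weight is the honest way to justify that substitution. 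Note, though, that this is not entirely free: the support-based formula for $\mu(f_1\wedge\dots\wedge f_k,\lambda)$ ranges over tuples of monomials whose associated $k\times k$ minor may vanish, so a priori it need not coincide with the intrinsic weight of the Pl\"ucker point and can depend on the chosen basis. A small additional argument (e.g.\ a triangular change of basis adapted to the $\lambda$-order, as in the proof of Proposition \ref{equality of omegas}) is needed here, and neither you nor the paper supplies it.

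Your fallback for linearly dependent $g_1,\dots,g_k$ (absorbing the extra terms by non-negativity) cannot work, because the stated inequality is genuinely false without linear independence: the reduction discards non-negative terms from the left-hand side, which weakens the wrong side of the inequality. Concretely, take $n=2$, $d=1$, $k=3$, so that $\mathcal{T}$ is the full space of linear forms, and take $g_1=x_0$, $g_2=x_0+x_1$, $g_3=x_0-x_1$ with $\lambda=\operatorname{Diag}(s,1,s^{-1})$; then $\omega(g_1,\lambda)+\omega(g_2,\lambda)+\omega(g_3,\lambda)=2+1+1=4$, while $\omega(\mathcal{T},\lambda)=(a_0-a_2)+(a_1-a_2)+0=3$. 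So your first reading is the correct one: for $k\geq 3$ the proposition must be understood as requiring the $g_i$ to be linearly independent (as distinct members of a pencil automatically are in the $k=2$ case treated by Zanardini), and ``distinct'' alone is, strictly speaking, insufficient. This does not affect the applications in the paper, where the $g_i$ are always produced as (or alongside) a generating set.
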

\begin{proof}
Notice that $\omega(f,\lambda)\geq0$ for all hypersurfaces $f$ and all normalised one-parameter subgroups $\lambda$, since $a_i\geq a_n$ for all $i<n$. This gives the left-hand side of the inequality. For the right-hand side of the inequality notice that 
\begin{equation*}
    \begin{split}
       \omega(\mathcal{T},\lambda)&=  \sum_{j=0}^{n-1} \bigg(\sum_{i=1}^k d_{i,j}(a_{j}-a_n)\bigg)\\
       &= \sum_{j=0}^{n-1} \bigg( d_{1,j}(a_{j}-a_n)\bigg) + \sum_{j=0}^{n-1} \bigg( d_{2,j}(a_{j}-a_n)\bigg) +\dots+ \sum_{j=0}^{n-1} \bigg( d_{k,j}(a_{j}-a_n)\bigg)\\
       &\geq \omega(g_1,\lambda)+\dots+ \omega(g_k,\lambda)
    \end{split}
\end{equation*}
since the $g_i$ are distinct.
\end{proof}



\begin{proposition}\label{equality of omegas}
Let $F =\{f=0\} \in \mathcal{T}$ be a hypersurface of degree $d$, and $\lambda$ a normalised one-parameter subgroup. Then there exist $k-1$ hypersurfaces $g_1, \dots, g_{k-1}\in \mathcal{T}$ such that 
$$\omega(\mathcal{T},\lambda) = \omega(f,\lambda)+\sum_{i=1}^{k-1} \omega(g_i,\lambda).$$
\end{proposition}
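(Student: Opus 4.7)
The plan is to exploit the fact that $\omega(\mathcal{T},\lambda)$ is independent of the basis chosen to represent $\mathcal{T}$: it differs from the Hilbert--Mumford weight $\mu(f_1\wedge\cdots\wedge f_k,\lambda)$ only by a purely $\lambda$-dependent term, and the latter depends only on the line $[f_1\wedge\cdots\wedge f_k]\in\mathbb{P}\bigwedge^k W$, not on the chosen lift to $\bigwedge^k W$; a change of basis rescales this lift by a nonzero determinant and so preserves $\omega(\mathcal{T},\lambda)$. Consequently we may freely pass to whichever basis of $\mathcal{T}$ is most convenient when computing $\omega(\mathcal{T},\lambda)$.

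I would fix a total ordering $\prec$ on degree-$d$ monomials refining the partial order by $\lambda$-weight (ties broken arbitrarily), and for any nonzero $h\in W$ set $\operatorname{in}_\lambda(h)$ to be the $\prec$-minimal element of $\operatorname{Supp}(h)$, so that $\omega(h,\lambda)=\sum_{j=0}^{n-1}d_j(a_j-a_n)$ where $\operatorname{in}_\lambda(h)=(d_0,\dots,d_n)$. Applying Gaussian elimination to the coefficient matrix of any basis of $\mathcal{T}$ (that is, bringing it to reduced row-echelon form with respect to the ordering $\prec$ on the monomial columns) produces a basis $h_1,\dots,h_k$ of $\mathcal{T}$ whose leading monomials $\operatorname{in}_\lambda(h_1),\dots,\operatorname{in}_\lambda(h_k)$ are pairwise distinct.

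Expanding the given $f\in\mathcal{T}$ as $f=\sum_i c_i h_i$, distinctness of the $\operatorname{in}_\lambda(h_i)$ forces $\operatorname{in}_\lambda(f)=\operatorname{in}_\lambda(h_{i_0})$, where $i_0$ picks out the $\prec$-smallest leading monomial among the $h_i$ with $c_i\neq 0$. After relabelling assume $i_0=1$, so $c_1\neq 0$, and replace $h_1$ by $f$ in the basis; the new basis $\{f,h_2,\dots,h_k\}$ of $\mathcal{T}$ still has pairwise distinct leading monomials, since $\operatorname{in}_\lambda(f)=\operatorname{in}_\lambda(h_1)$. With this basis, the tuple $(\operatorname{in}_\lambda(f),\operatorname{in}_\lambda(h_2),\dots,\operatorname{in}_\lambda(h_k))$ consists of pairwise distinct monomials, one drawn from each support, so it is admissible in the minimum defining $\omega(\mathcal{T},\lambda)$ and attains the value $\omega(f,\lambda)+\sum_{i=2}^{k}\omega(h_i,\lambda)$. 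The reverse inequality is Proposition~\ref{contr crit} applied to the $k$ distinct hypersurfaces $f,h_2,\dots,h_k$. Setting $g_i:=h_{i+1}$ yields the claimed equality.

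The only step requiring genuine care is the existence of a basis with pairwise distinct $\lambda$-leading monomials; however this is a standard linear-algebraic fact (reduced row-echelon form of the $k\times\binom{n+d}{d}$ coefficient matrix), so is not a real obstacle. Everything else is formal bookkeeping that rests on the basis-independence of $\omega(\mathcal{T},\lambda)$ and on the already-established inequality in Proposition~\ref{contr crit}.
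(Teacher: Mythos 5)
Your argument is correct, and while it lands on the same structural idea as the paper's proof --- exhibiting $k$ elements of $\mathcal{T}$, one of them $f$, whose pairwise distinct $\lambda$-minimal monomials realise the minimum defining $\omega(\mathcal{T},\lambda)$ --- your execution is genuinely different and noticeably more careful. The paper works with a fixed set of generators $f_1,\dots,f_k$, asserts that the $\lambda$-minimal monomial of $f=\sum\xi_i f_i$ coincides with that of some generator (which can fail when leading terms cancel), and then selects the $g_i$ by a greedy recursion on the supports of the $f_{i+1}$ without verifying that elements of $\mathcal{T}$ with the prescribed minimal monomials exist. You instead (i) note that $\omega(\mathcal{T},\lambda)$, read as the affine normalisation of the Hilbert--Mumford weight of the Pl\"ucker point, is basis-independent, (ii) pass to a row-echelon basis $h_1,\dots,h_k$ with pairwise distinct $\lambda$-leading monomials, and (iii) swap $f$ into that basis and sandwich between the admissible-tuple upper bound and the lower bound of Proposition~\ref{contr crit}. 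What this buys is real: the row-echelon basis is precisely what guarantees that the support-tuple formula computes the actual weight of the wedge (in a general basis the minimum over support tuples can be strictly smaller than the minimum over non-vanishing Pl\"ucker coordinates, and the stated equality can then fail for that literal reading of $\omega(\mathcal{T},\lambda)$), so the basis-independence point you lead with is not cosmetic but the crux of making the proposition true as stated. The only detail worth spelling out is that $f,h_2,\dots,h_k$ are distinct as hypersurfaces --- immediate since their leading monomials are pairwise distinct --- so that Proposition~\ref{contr crit} indeed applies to them.
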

\begin{proof}
Let $\mathcal{T}$ be generated by $k$ hypersurfaces of degree $d$, $f_i$. Since $F =\{f=0\} \in \mathcal{T}$, $f$ is a linear combination of the $f_i$ that generate $\mathcal{T}$, i.e. $f = \sum_{i=1}^k \xi_if_i$ for some $(\xi_1:\dots:\xi_k)\in \mathbb{P}^{k-1}$. Then, for a fixed normalised one-parameter subgroup $\lambda$, with respect to the $\lambda$-order introduced before, the minimum monomial $I= \min\{\operatorname{Supp}(f)\}=\min\{\operatorname{Supp}(f_l)\}$, where without loss of generality we can assume that $f_l = f_1$ (if not we can rearrange the generators such that $f_l = f_1$). Hence, let $I = (d_{1,0}, \dots, d_{1,n})$
Then, by taking $g_i \in \mathcal{T}$ recursively such that 
$$\min\{\operatorname{Supp}(g_i)\} = \min\bigg\{\operatorname{Supp}(f_{i+1}) \setminus\big(\bigcup_{k\leq i} \operatorname{Supp}(f_k) \big) \bigg\}$$
the result follows, as we obtain $k-1$ distinct monomials $I_i = (d_{i,0},\dots, d_{i,n})$, $2\leq i\leq k$ such that 
$$\omega(\mathcal{T},\lambda)\vcentcolon =  \sum_{j=0}^{n-1} \bigg(\sum_{i=1}^k d_{i,j}(a_{j}-a_n)\bigg) = \omega(f,\lambda)+\sum_{i=1}^{k-1}\omega(g_i,\lambda).$$
\end{proof}
\begin{corollary}\label{max cond}
Let $F = \{f = 0\} \in \mathcal{T}$ be a hypersurface of degree d, and $\lambda$ a one-parameter subgroup. Then there exist $k-1$ hypersurfaces $g_1, \dots, g_{k-1}\in \mathcal{T}$ such that 
$$\omega(\mathcal{T},\lambda) \leq k \max\bigg\{\omega(f,\lambda),\omega(g_1,\lambda), \dots, \omega(g_{k-1},\lambda)\bigg\}.$$
\end{corollary}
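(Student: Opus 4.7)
The plan is to deduce this directly from Proposition \ref{equality of omegas}, which already delivers the key structural identity. First I would apply that proposition to the given hypersurface $F = \{f=0\} \in \mathcal{T}$ and the fixed one-parameter subgroup $\lambda$, obtaining $k-1$ hypersurfaces $g_1,\dots,g_{k-1} \in \mathcal{T}$ such that
$$\omega(\mathcal{T},\lambda) = \omega(f,\lambda) + \sum_{i=1}^{k-1}\omega(g_i,\lambda).$$
The right-hand side is a sum of exactly $k$ real numbers, and for any finite collection of reals one has $\sum_{i=1}^k x_i \leq k \max\{x_1,\dots,x_k\}$. Applying this elementary inequality to the $k$ summands $\omega(f,\lambda), \omega(g_1,\lambda), \dots, \omega(g_{k-1},\lambda)$ yields
$$\omega(f,\lambda) + \sum_{i=1}^{k-1}\omega(g_i,\lambda) \leq k\max\bigl\{\omega(f,\lambda), \omega(g_1,\lambda), \dots, \omega(g_{k-1},\lambda)\bigr\},$$
and chaining this with the previous equality gives the claim.

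There is essentially no obstacle here: the statement is purely a bookkeeping consequence of the equality proved in Proposition \ref{equality of omegas}, and the only fact invoked beyond that proposition is the trivial inequality between an arithmetic sum and its maximum. One may optionally remark that by the observation at the start of the proof of Proposition \ref{contr crit}, each $\omega(\cdot,\lambda)$ is non-negative for normalised $\lambda$, so the maximum on the right is automatically non-negative; but this is not needed for the inequality itself, which holds regardless of signs.
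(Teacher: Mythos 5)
Your proof is correct and matches the paper's intent exactly: the corollary is stated without proof immediately after Proposition \ref{equality of omegas}, precisely because it follows from that decomposition combined with the elementary bound of a sum of $k$ terms by $k$ times their maximum. Nothing further is needed.
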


\begin{corollary}\label{all semi-stable}
If a $k$-tuple $\mathcal{T}\in \mathcal{R}_{n,d,k}$ has only semistable (respectively, stable) members, then $\mathcal{T}$ is semistable (respectively stable).
\end{corollary}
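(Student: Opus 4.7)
The plan is to chain Corollary \ref{max cond} with the Hilbert--Mumford criterion for a single hypersurface (Proposition \ref{semistab for hypersurfaces}) and then apply its tuple analogue (Lemma \ref{hilbert-mumford-interms of omega}). Since stability is a condition over all nontrivial normalised one-parameter subgroups relative to any maximal torus, I would fix an arbitrary maximal torus $T$ and an arbitrary nontrivial normalised one-parameter subgroup $\lambda(s)=\operatorname{Diag}(s^{a_0},\dots,s^{a_n})$ and show directly that
$$\omega(\mathcal{T},\lambda)\;\leq\;\frac{kd}{n+1}\Big(\sum_{k=0}^{n-1}a_k-na_n\Big)\qquad\text{(resp. }<\text{)}.$$

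The key step is to choose any hypersurface $F=\{f=0\}\in\mathcal{T}$ and invoke Corollary \ref{max cond} to produce hypersurfaces $g_1,\dots,g_{k-1}\in\mathcal{T}$ with
$$\omega(\mathcal{T},\lambda)\;\leq\;k\max\bigl\{\omega(f,\lambda),\omega(g_1,\lambda),\dots,\omega(g_{k-1},\lambda)\bigr\}.$$
By hypothesis, every member of $\mathcal{T}$, including $f$ and the $g_i$, is semistable (resp. stable), so the contrapositive of Proposition \ref{semistab for hypersurfaces} gives
$$\omega(h,\lambda)\;\leq\;\frac{d}{n+1}\Big(\sum_{k=0}^{n-1}a_k-na_n\Big)\qquad\text{(resp. }<\text{)}$$
for every $h\in\{f,g_1,\dots,g_{k-1}\}$. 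Multiplying through by $k$ and combining with the previous inequality yields exactly the bound required by Lemma \ref{hilbert-mumford-interms of omega}, completing the argument.

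There is essentially no obstacle once the preceding lemmas are in hand; the only minor point to check is that the right-hand side $\frac{d}{n+1}(\sum_{k=0}^{n-1}a_k-na_n)$ is positive for any nontrivial normalised $\lambda$, so that multiplying a strict inequality by $k>0$ preserves strictness in the stable case. This is immediate from $\sum_{j=0}^n a_j=0$ and $a_n<0$, which force $\sum_{k=0}^{n-1}a_k-na_n=-(n+1)a_n>0$. Hence the semistable and stable statements follow uniformly from the same one-line estimate.
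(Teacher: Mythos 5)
Your proof is correct and follows essentially the same route as the paper: combine Corollary \ref{max cond} with the single-hypersurface bound from Proposition \ref{semistab for hypersurfaces} and conclude via Lemma \ref{hilbert-mumford-interms of omega}. The extra check that $\sum_{k=0}^{n-1}a_k-na_n=-(n+1)a_n>0$ is a welcome bit of care the paper leaves implicit, but the argument is the same.
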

\begin{proof}
Let $f,g_1,\dots, g_{k-1}\in \mathcal{T}$ be $k$ semistable hypersurfaces as in Corollary \ref{max cond}. Then by Proposition \ref{semistab for hypersurfaces} for all $\lambda$ and $i$
$$\frac{\omega(f,\lambda)}{\bigg(\sum_{k=0}^{n-1}a_k - na_n\bigg)} \leq \frac{d}{n+1},$$
$$\frac{\omega(g_i,\lambda)}{\bigg(\sum_{k=0}^{n-1}a_k - na_n\bigg)} \leq \frac{d}{n+1},$$
and by Corollary \ref{max cond}
\begin{equation*}
\begin{split}
    \frac{\omega(\mathcal{T},\lambda)}{\bigg(\sum_{k=0}^{n-1}a_k - na_n\bigg)} &\leq k \max\bigg\{\omega(f,\lambda),\omega(g_1,\lambda), \dots, \omega(g_{k-1},\lambda)\bigg\}\\
    &\leq\frac{kd}{n+1}.
\end{split}
\end{equation*}
\end{proof}
\begin{corollary}
    If $\mathcal{T}$ contains only smooth members then $\mathcal{T}$ is stable.
\end{corollary}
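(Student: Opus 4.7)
The plan is to deduce this as an immediate consequence of Corollary \ref{all semi-stable} combined with the classical GIT result that smooth hypersurfaces in $\mathbb{P}^n$ are stable. Specifically, by Corollary \ref{all semi-stable}, in order to conclude that $\mathcal{T}$ is stable it suffices to show that every hypersurface $F = \{f=0\} \in \mathcal{T}$ is stable with respect to the $G$-action on $\mathcal{C} = \mathbb{P}(W)$.

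First I would invoke Mumford's classical theorem (see \cite[Proposition 4.2]{me} or the analogous statement in Mumford--Fogarty--Kirwan) asserting that any smooth hypersurface of degree $d \geq 2$ in $\mathbb{P}^n$ is GIT-stable under the natural $G = \operatorname{SL}(n+1)$-action; equivalently, for every non-trivial normalised one-parameter subgroup $\lambda$ one has $\omega(f,\lambda) < \tfrac{d}{n+1}\bigl(\sum_{k=0}^{n-1}a_k - na_n\bigr)$, which is exactly the stability criterion of Proposition \ref{semistab for hypersurfaces}. Since each $F \in \mathcal{T}$ is smooth by hypothesis, each such $F$ is stable.

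Having verified that every member of $\mathcal{T}$ is stable, I would then appeal directly to Corollary \ref{all semi-stable}, which gives the stability of $\mathcal{T}$ itself. Concretely, choosing $f, g_1, \dots, g_{k-1} \in \mathcal{T}$ as in Proposition \ref{equality of omegas} (or Corollary \ref{max cond}), each individual $\omega(f,\lambda), \omega(g_i,\lambda)$ satisfies the strict stability inequality, and summing via Proposition \ref{equality of omegas} yields
$$\omega(\mathcal{T},\lambda) = \omega(f,\lambda) + \sum_{i=1}^{k-1} \omega(g_i,\lambda) < \frac{kd}{n+1}\left(\sum_{k=0}^{n-1}a_k - na_n\right),$$
which is the strict form of Lemma \ref{hilbert-mumford-interms of omega} and hence shows that $\mathcal{T}$ is stable.

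The main obstacle I would anticipate is essentially none beyond properly citing the smoothness-implies-stability result: the argument reduces to a one-line application of Corollary \ref{all semi-stable} once that classical input is accepted. The only subtlety is that the classical statement requires some mild restriction (typically $d \geq 2$ and $n \geq 2$, to avoid trivial or degenerate cases like hyperplanes or smooth conics where every member lies in a single $G$-orbit), which should be implicit from the context of the Grassmannian $\mathcal{R}_{n,d,k}$ being considered.
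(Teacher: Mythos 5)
Your argument is correct and is exactly what the paper intends: the corollary is left without proof, and it is meant as an immediate consequence of Corollary \ref{all semi-stable} together with Mumford's classical theorem that smooth hypersurfaces are stable. One small correction to your citation of the classical input: stability of smooth hypersurfaces holds for $d \geq 3$, not $d \geq 2$ --- a smooth quadric (e.g.\ $x_0x_n + x_1x_{n-1} + \cdots$, destabilised to $\mu = 0$ by $\lambda = \operatorname{Diag}(s,1,\dots,1,s^{-1})$) is only strictly semistable, so for $d = 2$ your argument, and indeed the corollary as stated in the paper, only yields semistability of $\mathcal{T}$ via the semistable half of Corollary \ref{all semi-stable}.
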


The results of \cite{zanardini2021stability} can also be extended as follows:

\begin{theorem}[Analogue of {\cite[Theorem 3.14]{zanardini2021stability}}]

If $\mathcal{T}\in \mathcal{R}_{n,d,k}$ contains at worst one strictly semistable hypersurface (and all
other hypersurfaces in $\mathcal{T}$ are stable), then $\mathcal{T}$ is stable.
\end{theorem}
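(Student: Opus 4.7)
The plan is to combine Proposition \ref{equality of omegas} with the hypothesis that at most one member of $\mathcal{T}$ is strictly semistable, using Lemma \ref{hilbert-mumford-interms of omega} as the criterion for stability. The key observation is that, if we single out the strictly semistable hypersurface $F_0 = \{f_0 = 0\}$, then Proposition \ref{equality of omegas} lets us write $\omega(\mathcal{T},\lambda)$ as $\omega(f_0,\lambda)$ plus the weights of $k-1$ \emph{other} hypersurfaces in $\mathcal{T}$; by hypothesis, those remaining hypersurfaces are all stable. The weak bound on $\omega(f_0,\lambda)$ together with the $k-1$ strict bounds on the other summands then combine into a strict bound on $\omega(\mathcal{T},\lambda)$, which is exactly what stability demands.

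Concretely, fix a non-trivial normalised one-parameter subgroup $\lambda$ and set $C(\lambda) := \sum_{j=0}^{n-1} a_j - n a_n$, which is strictly positive (since $a_n < 0$ and $a_j \geq a_n$ for $j < n$). If $\mathcal{T}$ contains no strictly semistable member, Corollary \ref{all semi-stable} already yields stability, so I may assume a strictly semistable $F_0 = \{f_0 = 0\} \in \mathcal{T}$ exists. Applying Proposition \ref{equality of omegas} with the choice $f = f_0$ produces $k - 1$ hypersurfaces $g_1, \dots, g_{k-1} \in \mathcal{T}$, distinct from each other and from $f_0$, with
$$\omega(\mathcal{T}, \lambda) = \omega(f_0, \lambda) + \sum_{i=1}^{k-1} \omega(g_i, \lambda).$$
Because $F_0$ is the unique strictly semistable member, each $g_i$ is stable. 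The (contrapositive of the) stability characterisation in Proposition \ref{semistab for hypersurfaces} then gives $\omega(g_i, \lambda) < \tfrac{d}{n+1} C(\lambda)$ and $\omega(f_0, \lambda) \leq \tfrac{d}{n+1} C(\lambda)$; summing yields $\omega(\mathcal{T}, \lambda) < \tfrac{kd}{n+1} C(\lambda)$, and Lemma \ref{hilbert-mumford-interms of omega} delivers stability.

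The only subtle point is the freedom to plug the distinguished hypersurface $f_0$ into Proposition \ref{equality of omegas}: its statement applies to any $f \in \mathcal{T}$, and the construction in its proof (via the basis reordering $f_l = f_1$) automatically returns $g_i$ distinct from $f_0$, so no additional care is required. Beyond that, the argument is essentially bookkeeping — the essential point is that even a single strict inequality among the $k$ summands, together with $C(\lambda) > 0$, suffices to upgrade the total estimate from $\leq$ to $<$.
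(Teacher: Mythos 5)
Your proof is correct and follows essentially the same route as the paper's: decompose $\omega(\mathcal{T},\lambda)$ via Proposition \ref{equality of omegas} with $f$ taken to be the strictly semistable member, bound each summand using Proposition \ref{semistab for hypersurfaces}, and conclude via Lemma \ref{hilbert-mumford-interms of omega}. You are in fact slightly more careful than the paper, whose final chain uses only non-strict inequalities and thus nominally lands at $\leq \frac{kd}{n+1}$ (semistability); your point that the $k-1$ stable members contribute \emph{strict} bounds (which implicitly requires $k\geq 2$) is exactly what upgrades the conclusion to stability.
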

\begin{proof}
Let $f\in \mathcal{T}$ be strictly semistable, i.e. by Proposition \ref{semistab for hypersurfaces} since $f$ is strictly semi-stable, for some normalised $\lambda$, we have
$$\omega(f,\lambda) = \frac{d}{n+1}\bigg(\sum_{k=0}^{n-1}a_k - na_n\bigg).$$
By Proposition \ref{equality of omegas} there exist $g_1,\dots,g_{k-1}\in \mathcal{T}$ which are stable and 
\begin{equation*}
    \begin{split}
        \frac{\omega(\mathcal{T},\lambda)}{\bigg(\sum_{k=0}^{n-1}a_k - na_n\bigg)}&= \frac{\omega(f,\lambda)+\sum_{i=1}^{k-1} \omega(g_i,\lambda)}{\bigg(\sum_{k=0}^{n-1}a_k - na_n\bigg)}\\
    &\leq \frac{d}{n+1}+\sum_{i=1}^{k-1}\frac{d}{n+1}\\
    &=\frac{kd}{n+1}.
    \end{split}
\end{equation*}
\end{proof}

\begin{theorem} [Analogue of {\cite[Proposition 4.6]{zanardini2021stability}}]\label{zan_lct_analog}
For a $k$-tuple $\mathcal{T}\in \mathcal{R}_{n,d,k}$, with $k>1$, and any base point $p$ of $\mathcal{T}$, there exists a one-parameter subgroup $\lambda$ such that for any hypersurface $F =(f=0)$ in $\mathcal{T}$ 

$$\frac{\sum_{i=0}^{n-1}(a_i)-na_n}{\omega(\mathcal{T}, \lambda)}\leq \mathrm{lct}_p(\mathbb{P}^n, F).$$

\end{theorem}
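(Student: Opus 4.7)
The strategy is to reduce the inequality for an arbitrary $F \in \mathcal{T}$ to the case of a single well-chosen member of $\mathcal{T}$, using Proposition~\ref{contr crit} to dominate $\omega(\mathcal{T},\lambda)$ from below by $\omega(F_\star,\lambda)$ for that distinguished $F_\star$.

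After applying an element of $\operatorname{SL}(n+1)$ I would first assume that $p=(0:\dots:0:1)$. In these coordinates, any normalised one-parameter subgroup $\lambda = \operatorname{Diag}(s^{a_0}, \dots, s^{a_n})$ corresponds to a monomial valuation on the affine chart $\{x_n \neq 0\}$ centred at $p$, with weights $w_j = a_j - a_n$ on the local coordinates $y_j = x_j/x_n$. The associated exceptional divisor $E_\lambda$ of the corresponding weighted blow-up has log discrepancy
\[
A_{E_\lambda} = \sum_{j=0}^{n-1}(a_j - a_n) = \sum_{j=0}^{n-1}a_j - n a_n,
\]
exactly the numerator in the theorem, and by Definition~\ref{affine weight} the weighted order of a hypersurface $F=\{f=0\}$ along $E_\lambda$ is $v_{E_\lambda}(F)=\omega(f,\lambda)$.

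Next, by lower semi-continuity of the log canonical threshold in families, the function $F \mapsto \mathrm{lct}_p(\mathbb{P}^n, F)$ on the compact parameter space $\mathcal{T} \cong \mathbb{P}^{k-1}$ attains its minimum at some $F_\star = \{f_\star = 0\} \in \mathcal{T}$. I would then choose $\lambda$ so that the associated monomial valuation computes $\mathrm{lct}_p(\mathbb{P}^n, F_\star)$; that is,
\[
\mathrm{lct}_p(\mathbb{P}^n,F_\star)=\frac{A_{E_\lambda}}{v_{E_\lambda}(F_\star)}=\frac{\sum_{j=0}^{n-1}a_j - n a_n}{\omega(f_\star,\lambda)}.
\]
For such a $\lambda$, Proposition~\ref{contr crit} applied to the single member $F_\star\in\mathcal{T}$ yields $\omega(f_\star,\lambda)\leq\omega(\mathcal{T},\lambda)$, and by the defining minimality of $F_\star$ we have $\mathrm{lct}_p(\mathbb{P}^n,F_\star)\leq\mathrm{lct}_p(\mathbb{P}^n,F)$ for every $F\in\mathcal{T}$. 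Stringing these together gives
\[
\frac{\sum_{j=0}^{n-1}a_j - n a_n}{\omega(\mathcal{T},\lambda)}\leq\frac{\sum_{j=0}^{n-1}a_j - n a_n}{\omega(f_\star,\lambda)}=\mathrm{lct}_p(\mathbb{P}^n,F_\star)\leq\mathrm{lct}_p(\mathbb{P}^n,F),
\]
which is the desired inequality.

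The main obstacle is producing the 1-PS $\lambda$ whose monomial valuation actually computes $\mathrm{lct}_p(\mathbb{P}^n, F_\star)$: in general a divisorial valuation realising the log canonical threshold of a hypersurface need not be monomial in a fixed coordinate system, and only the inequality $A_{E_\lambda}/v_{E_\lambda}(F_\star) \geq \mathrm{lct}_p(\mathbb{P}^n,F_\star)$ comes for free from the infimum formula $\mathrm{lct}_p=\inf_E A_E/v_E$. This is handled by exploiting the freedom to conjugate $\lambda$ into any maximal torus of $\operatorname{SL}(n+1)$ — equivalently, to choose coordinates adapted to $F_\star$ — together with standard results on log canonical thresholds of hypersurface singularities, for instance the fact that when $f_\star$ is Newton non-degenerate its log canonical threshold is computed by a weighted blow-up and hence by a normalised 1-PS in the adapted coordinates.
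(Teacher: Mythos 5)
Your reduction to a distinguished member $F_\star$ minimising $\mathrm{lct}_p$ and the final chain of inequalities are fine, but the whole argument rests on the existence of a normalised one-parameter subgroup $\lambda$ of $\operatorname{SL}(n+1)$ whose monomial valuation \emph{computes} $\mathrm{lct}_p(\mathbb{P}^n,F_\star)$, i.e.\ for which $\mathrm{lct}_p(\mathbb{P}^n,F_\star)=\bigl(\sum_{j=0}^{n-1}(a_j-a_n)\bigr)/\omega(f_\star,\lambda)$. As you note, the infimum formula only gives $\mathrm{lct}_p\leq A_{E_\lambda}/v_{E_\lambda}(F_\star)$, which chains in the wrong direction, so you genuinely need the equality --- and it fails in general. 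Even when the log canonical threshold of the germ is computed by a weighted blow-up, the adapted coordinates are usually only analytic, not linear: for $\tilde F=(u-v^2)^2+v^d$ with $d\geq 5$ one has $\mathrm{lct}_0=\tfrac12+\tfrac1d$, computed by weights $(d,2)$ in the coordinates $(u-v^2,v)$, whereas a direct check of the supports shows that every monomial valuation in \emph{linear} coordinates gives a ratio at least $3/4$. Since $\lambda$ must lie in $\operatorname{SL}(n+1)$, conjugation only buys you linear changes of coordinates, so no such $\lambda$ exists for this $F_\star$; and in higher dimension there are germs whose threshold is not computed by a monomial valuation in any coordinates, so the appeal to Newton non-degeneracy does not cover the general case. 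The obstacle you flag in your last paragraph is therefore not a technicality to be ``handled'' but the entire difficulty, and it remains open in your write-up.

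A symptom of the problem is that your argument never uses the hypotheses $k>1$ and ``$p$ is a base point of $\mathcal{T}$''; taken at face value it would prove the $k=1$ statement $\sum(a_i-a_n)/\omega(f,\lambda)\leq\mathrm{lct}_p(\mathbb{P}^n,F)$ for some $\lambda$, which the example above refutes. The paper's proof avoids the hard equality altogether: it fixes an essentially arbitrary normalised $\lambda$, uses the base-point condition together with $k\geq 2$ to bound $\omega(\mathcal{T},\lambda)\geq\sum_{i=0}^{n-1}(a_i-a_n)$ from below (so that $c=\sum(a_i-a_n)/\omega(\mathcal{T},\lambda)\leq 1$ is a legitimate candidate coefficient), and then runs the finite-cover/weighted-blow-up computation in the \emph{easy} direction, using $\omega(f,\lambda)\leq\omega(\mathcal{T},\lambda)$ from Proposition \ref{contr crit} to conclude that $(\mathbb{P}^n,cF)$ is log canonical at $p$. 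The extra weight coming from summing over $k\geq 2$ distinct monomials is exactly the slack that makes the one-sided estimate suffice; your argument discards that slack and then has to pay for it with an equality that is not available.
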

\begin{proof}
The proof follows the idea of proof of  \cite[ Proposition 4.1]{zanardini2021stability}. Without loss of generality, we choose coordinates $(x_0:\dots:x_n)$ in $\mathbb{P}^n$ such that $p = (0:\dots:0:1)$. Let $\lambda$ be a normalized one-parameter subgroup with 
$$\lambda = \operatorname{Diag}(s,s^{a_1}, \dots, s^{a_{n-1}}, s^{-1-\sum_{i=1}^{n-1}a_{i}}).$$

Notice then that 

\begin{equation*}
    \begin{split}
        \sum_{i=0}^{n-1}(a_i-a_n)& = 2+\sum_{i=0}^{n-1}a_i+\sum_{j=0}^{n-1}\big(1+\sum_{i=0}^{n-1}a_i+a_j\big)\\
        & = (n+1)+(n+1)\sum_{i=0}^{n-1}a_i\\
        &=(n+1)\big(1+\sum_{i=0}^{n-1}a_i\big)
    \end{split}
\end{equation*}
and

\begin{equation*}
    \begin{split}
        \omega(\mathcal{T}, \lambda)&= \sum_{j=0}^{n-1} \bigg(\sum_{i=1}^k d_{i,j}(a_{j}-a_n)\bigg)\\
        &\geq\sum_{i=1}^{2} \bigg(\sum_{j=0}^{n-1} d_{i,j}(a_{j}-a_n)\bigg)\\
        &\geq \bigg[ (n+1)+(n+1)\sum_{i=0}^{n-1}a_i\bigg]
    \end{split}
\end{equation*}
where the last inequality follows by \cite[Proposition 4.1]{zanardini2021stability}. Hence,
$$\frac{\sum_{i=0}^{n-1}(a_i-a_n)}{\omega(\mathcal{T}, \lambda)} =\frac{(n+1)\big(1+\sum_{i=0}^{n-1}a_i\big)}{\sum_{j=0}^{n-1} \bigg(\sum_{i=1}^k d_{i,j}(1+a_{j}+\sum_{i=0}^{n-1}a_i)\bigg)} \leq 1.$$

For contradiction, assume that there exists $F =\{f=0\} \in \mathcal{T}$ such that
$$\mathrm{lct}_p(\mathbb{P}^n, F)<\frac{\sum_{i=0}^{n-1}(a_i-a_n)}{\omega(\mathcal{T}, \lambda)}.$$
Then, let $\tilde{F}(u_1,\dots,u_n) = f(x_0:\dots:x_{n-1}:1)$, where $u_i = \frac{x_{i-1}}{x_n}$ defined in a neighbourhood around $p$, which is enough to compute $\mathrm{lct}_p$ since it is a local invariant. Also, assign weights $\omega(u_1) = a_0 - a_n$, $\omega(u_i) = a_{i-1} - a_n$, $\omega(u_n) = a_{n-1} - a_n$. Consider the finite morphism $\phi \colon \mathbb{C}^n\rightarrow \mathbb{C}^n$ where $(u_1,\dots,u_n)\mapsto (u_1^{\omega(u_1)},\dots,u_n^{\omega(u_n)})$. Then, let $H_{u_i} = \{u_i =0\}$ and

$$\Delta\vcentcolon= \sum_{i=1}^n(1-\omega(u_i))H_{u_i} +c\tilde{F}(u_1^{\omega(u_1)},\dots,u_n^{\omega(u_n)})$$
for some $c\in\mathbb{Q}\cap [0,1]$. Then,

$$\phi^*(K_{\mathbb{C}^n}+c\tilde{F}(u_1,\dots,u_n) ) = K_{\mathbb{C}^n}+\Delta.$$
We also know that the pair $(\mathbb{C}^n,\tilde{F})$ is log canonical
at the origin if and only if the pair $(\mathbb{C}^n,\Delta)$ is log canonical at the origin. Let 
$$c = \frac{\sum_{i=1}^n\omega(u_i)}{\omega(\mathcal{T},\lambda)}$$
where $c> \mathrm{lct}_p(\mathbb{P}^n, F) = \mathrm{lct}_{\vec{0}}(\mathbb{C}^n, \tilde{F})$ by the assumption. Blowing up $\mathbb{C}^n$ at the origin, we then have that for the log discrepancy of $\Delta$ with respect to the exceptional divisor $E$ of the blow up
$$a(E;\mathbb{C}^n,\Delta) = -1-\sum_{i=1}^n\omega(u_i)-c\omega(f,\lambda)<-1$$
which in turn would imply that $\omega(f,\lambda)>\omega(\mathcal{T},\lambda)$ which contradicts Proposition \ref{contr crit}.

\end{proof}

\begin{corollary}\label{if_ss_lct_condition}
If the $k$-tuple $\mathcal{T}\in \mathcal{R}_{n,d,k}$, with $k>1$ is (semi-)stable, then
for any hypersurface $F =\{f=0\} \in\mathcal{T}$ and any base point $p$ of $\mathcal{T}$ 
$$\frac{n+1}{kd}< \mathrm{lct}_p(\mathbb{P}^n, F) \text{ (respectively } \leq).$$

\end{corollary}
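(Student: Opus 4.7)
The plan is to derive the corollary as a direct composition of Theorem \ref{zan_lct_analog} and the reformulated Hilbert-Mumford criterion in Lemma \ref{hilbert-mumford-interms of omega}. The conclusion we want is a lower bound on $\mathrm{lct}_p(\mathbb{P}^n,F)$ in terms of $n$, $k$, $d$, which is natural because Theorem \ref{zan_lct_analog} already produces a lower bound on $\mathrm{lct}_p$ in terms of $\omega(\mathcal{T},\lambda)$ and the linear combination $\sum_{i=0}^{n-1}(a_i-a_n)$, while the (semi-)stability hypothesis gives, via Lemma \ref{hilbert-mumford-interms of omega}, a universal upper bound on $\omega(\mathcal{T},\lambda)$ in terms of the very same linear combination.

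First, fix a base point $p$ of $\mathcal{T}$ and a hypersurface $F = \{f=0\} \in \mathcal{T}$. I would apply Theorem \ref{zan_lct_analog} to produce a normalised one-parameter subgroup $\lambda$ (depending on $p$, via the choice of coordinates with $p=(0:\dots:0:1)$) for which
\[
\mathrm{lct}_p(\mathbb{P}^n, F) \;\geq\; \frac{\sum_{i=0}^{n-1}(a_i-a_n)}{\omega(\mathcal{T},\lambda)}.
\]
Note that the denominator $\omega(\mathcal{T},\lambda)$ is strictly positive because $k>1$ and the monomials chosen in $\operatorname{Supp}(f_i)$ are distinct, so division is legitimate.

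Next, I would invoke the (semi-)stability assumption via Lemma \ref{hilbert-mumford-interms of omega}, applied precisely to this same $\lambda$. Stability gives
\[
\omega(\mathcal{T},\lambda) \;<\; \frac{kd}{n+1}\Bigl(\sum_{i=0}^{n-1}a_i - n a_n\Bigr) \;=\; \frac{kd}{n+1}\sum_{i=0}^{n-1}(a_i - a_n),
\]
and semistability gives the same inequality with $\leq$. Rearranging this into the form that matches the bound produced in the first step yields
\[
\frac{n+1}{kd} \;<\; \frac{\sum_{i=0}^{n-1}(a_i - a_n)}{\omega(\mathcal{T},\lambda)}
\]
in the stable case, and with $\leq$ in the semistable case.

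Finally, chaining the two inequalities gives
\[
\frac{n+1}{kd} \;<\; \frac{\sum_{i=0}^{n-1}(a_i-a_n)}{\omega(\mathcal{T},\lambda)} \;\leq\; \mathrm{lct}_p(\mathbb{P}^n, F),
\]
with the first strict inequality replaced by $\leq$ under the semistability hypothesis, which is exactly the claimed bound. There is no real obstacle here beyond bookkeeping; the only point requiring a moment of care is that the universal one-parameter subgroup condition in Lemma \ref{hilbert-mumford-interms of omega} is applied to the particular $\lambda$ produced by Theorem \ref{zan_lct_analog}, so the same $\lambda$ appears on both sides of the chain, and that the inequality $\omega(\mathcal{T},\lambda)>0$ permits the division. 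The corollary requires no further ingredient.
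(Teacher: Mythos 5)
Your proposal is correct and is essentially the paper's own proof: the paper likewise chains Theorem \ref{zan_lct_analog} with the (semi-)stability inequality of Lemma \ref{hilbert-mumford-interms of omega}, rearranged to $\frac{\sum_{i=0}^{n-1}(a_i-a_n)}{\omega(\mathcal{T},\lambda)}\geq \frac{n+1}{kd}$ (strict for stable), applied to the $\lambda$ furnished by the theorem. The only small quibble is your justification of $\omega(\mathcal{T},\lambda)>0$: distinctness of the monomials alone does not force strict positivity for an arbitrary normalised $\lambda$, but for the specific $\lambda$ of Theorem \ref{zan_lct_analog} positivity follows from the bound $\omega(\mathcal{T},\lambda)\geq (n+1)\bigl(1+\sum_{i=0}^{n-1}a_i\bigr)>0$ established in that theorem's proof, so the division is indeed legitimate.
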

\begin{proof}
Since $\mathcal{T}$ is (semi-)stable, for all normalized one-parameter subgroups $\lambda$ we have
$$\frac{\sum_{i=0}^{n-1}(a_i-a_n)}{\omega(\mathcal{T}, \lambda)}\geq \frac{n+1}{kd}$$
by Lemma \ref{hilbert-mumford-interms of omega}.
\end{proof}

\begin{proposition}\label{rel wrt lct}
Given a tuple $\mathcal{T}\in \mathcal{R}_{n,d,k}$ we have that for any one-parameter subgroup $\lambda$ there exists $F =\{f=0\} \in \mathcal{T}$ such that

$$\frac{\omega(\mathcal{T},\lambda)}{\bigg(\sum_{k=0}^{n-1}a_k - na_n\bigg)}\leq \frac{k}{\mathrm{lct}(\mathbb{P}^n,F)}.$$

\end{proposition}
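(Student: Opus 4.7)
The plan is to reduce the statement to an upper bound on the log canonical threshold of a single hypersurface in $\mathcal{T}$, and then read off that bound from the toric (monomial) valuation naturally attached to $\lambda$.

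First, I would fix a normalised one-parameter subgroup $\lambda$ with weights $(a_0,\dots,a_n)$. By Proposition \ref{equality of omegas} one may pick a hypersurface $F=\{f=0\}\in\mathcal T$ and further hypersurfaces $g_1,\dots,g_{k-1}\in\mathcal T$ so that
$$\omega(\mathcal T,\lambda)=\omega(f,\lambda)+\sum_{i=1}^{k-1}\omega(g_i,\lambda),$$
and after relabelling so that $f$ realises the maximum of these affine weights (equivalently, applying Corollary \ref{max cond}) one obtains $\omega(\mathcal T,\lambda)\le k\,\omega(f,\lambda)$. This $F$ is the hypersurface whose existence the proposition asserts.

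The remaining task is to bound $\omega(f,\lambda)$ from above in terms of $\mathrm{lct}(\mathbb P^n,F)$. Working in affine coordinates $u_i=x_{i-1}/x_n$ around $p=(0{:}\cdots{:}0{:}1)$, the non-negative integers $\omega(u_i)=a_{i-1}-a_n$ determine a monomial valuation $v$ on $\mathbb C^n$ whose log discrepancy is $A(v)=\sum_{j=0}^{n-1}(a_j-a_n)=\sum_{k=0}^{n-1}a_k-na_n$, and whose value on the dehomogenisation $\tilde f=f(u_1,\dots,u_n,1)$ equals $\omega(f,\lambda)$ by definition of the affine weight. The general valuative upper bound $\mathrm{lct}(X,D)\le A_X(v)/v(D)$, valid for any divisorial valuation $v$, then yields
$$\mathrm{lct}(\mathbb P^n,F)\;\le\;\mathrm{lct}_p(\mathbb P^n,F)\;\le\;\frac{\sum_{k=0}^{n-1}a_k-na_n}{\omega(f,\lambda)}.$$
Combining this with $\omega(\mathcal T,\lambda)\le k\,\omega(f,\lambda)$ and dividing by $\sum a_k-na_n>0$ gives the claim.

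The main subtle step is the second one. If one prefers not to invoke the valuative bound as a black box, the same estimate can be extracted by rerunning the blow-up calculation from the proof of Theorem \ref{zan_lct_analog} with the inequality reversed: assume for contradiction $\mathrm{lct}_p(\mathbb P^n,F)>(\sum a_k-na_n)/\omega(f,\lambda)$, take this ratio as the coefficient $c$, form the boundary $\Delta=\sum_{i=1}^n(1-\omega(u_i))H_{u_i}+c\,\tilde f(u_1^{\omega(u_1)},\dots,u_n^{\omega(u_n)})$, pull back via the finite morphism $\phi$ used there, and compute the log discrepancy along the exceptional divisor of the blow-up of $\mathbb C^n$ at the origin to contradict log canonicity of $(\mathbb P^n,cF)$ at $p$.
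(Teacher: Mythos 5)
Your proof is correct and takes essentially the same route as the paper: the reduction to a single hypersurface via Corollary \ref{max cond} is identical, and your monomial-valuation bound $\mathrm{lct}(\mathbb{P}^n,F)\le \big(\sum_{k=0}^{n-1}a_k-na_n\big)/\omega(f,\lambda)$ is precisely the content of the result the paper invokes as a black box (Koll\'ar's Proposition 8.13 on weights and log canonical thresholds). The only difference is cosmetic: you supply a self-contained valuative (or blow-up) justification of that single-hypersurface estimate, which the paper simply cites.
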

\begin{proof}
By \cite[Proposition 8.13]{kollar} we have that 
$$\frac{\omega(f,\lambda)}{\bigg(\sum_{k=0}^{n-1}a_k - na_n\bigg)} \leq \frac{1}{\mathrm{lct}(\mathbb{P}^n,F)},$$
and we obtain the result by Corollary \ref{max cond}.

\end{proof}
\begin{corollary}\label{zanardini corollary to go in intro}
If a tuple $\mathcal{T}\in \mathcal{R}_{n,d,k}$ is such that $\mathrm{lct}(\mathbb{P}^n,F)\geq \frac{n+1}{d}$ (respectively, $>\frac{n+1}{d}$)
for any hypersurface $F = \{f=0\}$ in $\mathcal{T}$, then $\mathcal{T}$ is semistable (respectively, stable).
\end{corollary}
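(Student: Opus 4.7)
The plan is to derive the corollary directly from Proposition \ref{rel wrt lct} (which bridges affine weights of tuples and log canonical thresholds of their members) combined with the reformulation of the Hilbert-Mumford numerical criterion in Lemma \ref{hilbert-mumford-interms of omega}. Since both ingredients are already established, the corollary should follow by a short chain of inequalities with no new geometric input required.

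Concretely, I would fix an arbitrary non-trivial normalised one-parameter subgroup $\lambda = \operatorname{Diag}(s^{a_0},\dots,s^{a_n})$ of the maximal torus $T$, and apply Proposition \ref{rel wrt lct} to produce a hypersurface $F = \{f=0\}\in\mathcal{T}$ with
$$\frac{\omega(\mathcal{T},\lambda)}{\sum_{i=0}^{n-1}a_i - na_n}\;\leq\;\frac{k}{\mathrm{lct}(\mathbb{P}^n,F)}.$$
Then I would invoke the hypothesis $\mathrm{lct}(\mathbb{P}^n,F)\geq (n+1)/d$ (respectively $>$) to obtain
$$\frac{\omega(\mathcal{T},\lambda)}{\sum_{i=0}^{n-1}a_i - na_n}\;\leq\;\frac{kd}{n+1}\quad\text{(respectively }<\text{)},$$
which rearranges to $\omega(\mathcal{T},\lambda)\leq \frac{kd}{n+1}\bigl(\sum_{i=0}^{n-1}a_i - na_n\bigr)$ (respectively strict). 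Since $\lambda$ was arbitrary, Lemma \ref{hilbert-mumford-interms of omega} delivers semistability (respectively stability) of $\mathcal{T}$.

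There is essentially no obstacle here: the real work is absorbed into Proposition \ref{rel wrt lct} and, before it, into Kollár's bound \cite[Proposition 8.13]{kollar}, together with Corollary \ref{max cond}. The only small point to be careful about is that the hypersurface $F$ produced by Proposition \ref{rel wrt lct} depends on the chosen $\lambda$, so the lct hypothesis must be assumed uniformly over \emph{all} members of $\mathcal{T}$ in order for the argument to succeed for every $\lambda$; once this is noted, the strict and non-strict cases are parallel and the proof is immediate.
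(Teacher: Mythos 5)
Your proof is correct and follows essentially the same route as the paper: apply Proposition \ref{rel wrt lct} for each normalised one-parameter subgroup $\lambda$, feed in the uniform lct hypothesis, and conclude via Lemma \ref{hilbert-mumford-interms of omega}. Your explicit remark that the hypersurface $F$ depends on $\lambda$, so the lct bound must hold for all members of $\mathcal{T}$, is a point the paper's own proof glosses over.
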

\begin{proof}
Let $f\in \mathcal{T}$ a hypersurface with $\mathrm{lct}(\mathbb{P}^n,F)\geq \frac{n+1}{d}$ (respectively $>\frac{n+1}{d}$). Then, for any normalized one-parameter subgroup $\lambda$,

\begin{equation*}
    \begin{split}
        \frac{\omega(\mathcal{T},\lambda)}{\bigg(\sum_{k=0}^{n-1}a_k - na_n\bigg)}&\leq \frac{k}{\mathrm{lct}(\mathbb{P}^n,F)}\\
        &\leq \frac{kd}{n+1}\quad (< \frac{kd}{n+1}\text{ resp.).}
    \end{split}
\end{equation*}
\end{proof}

\begin{corollary}
If $\mathcal{T}$ is (semi-)stable then 
$$\frac{n+1}{kd}\leq \mathrm{lct}(\mathbb{P}^n, \mathcal{T})\quad \text{respectively, }<$$
\end{corollary}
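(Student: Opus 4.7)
The plan is to deduce this corollary directly from Corollary \ref{if_ss_lct_condition} by taking an infimum. First I would interpret $\mathrm{lct}(\mathbb{P}^n,\mathcal{T})$ as the infimum of the local log canonical thresholds $\mathrm{lct}_p(\mathbb{P}^n,F)$ as $F$ varies over hypersurfaces in $\mathcal{T}$ and $p$ varies over the base points of $\mathcal{T}$. This is the natural quantity controlled by the machinery of Section \ref{git_lc_sect}, since away from the base locus the local lct at a smooth point is $1$ (well above $\tfrac{n+1}{kd}$ in every relevant regime), so only the behavior near $\operatorname{Bs}(\mathcal{T})$ is capable of forcing the infimum below the threshold.

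Next I would feed this identification into Corollary \ref{if_ss_lct_condition}. In the semistable case every pair $(F,p)$ in the infimum satisfies $\mathrm{lct}_p(\mathbb{P}^n,F)\ge \frac{n+1}{kd}$, and taking the infimum preserves the weak inequality, yielding $\frac{n+1}{kd}\le \mathrm{lct}(\mathbb{P}^n,\mathcal{T})$ with no further work.

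The main obstacle is the stable case, where Corollary \ref{if_ss_lct_condition} only gives the strict pointwise bound $\mathrm{lct}_p(\mathbb{P}^n,F) > \frac{n+1}{kd}$, and an infimum of strict inequalities is not automatically strict. To promote strictness I would appeal to compactness together with semicontinuity: the parameter space $\mathcal{T}\cong\mathbb{P}^{k-1}$ is compact, the base locus $\operatorname{Bs}(\mathcal{T})\subset\mathbb{P}^n$ is a projective subvariety, and the assignment $(F,p)\mapsto \mathrm{lct}_p(\mathbb{P}^n,F)$ is lower semicontinuous on this compact product. Consequently the infimum is attained at some pair $(F_0,p_0)$, and the strict bound from Corollary \ref{if_ss_lct_condition} applied at $(F_0,p_0)$ transfers verbatim to $\mathrm{lct}(\mathbb{P}^n,\mathcal{T})$, giving $\frac{n+1}{kd}<\mathrm{lct}(\mathbb{P}^n,\mathcal{T})$.

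If one prefers to avoid the semicontinuity argument, an alternative would be to run the divisorial-valuation argument from the proof of Theorem \ref{zan_lct_analog} once more: fix any base point $p$ and the corresponding one-parameter subgroup $\lambda$, so that the blow-up exceptional divisor $E$ computing $\mathrm{lct}_p$ in that proof satisfies $\frac{A(E)}{\operatorname{ord}_E(\mathcal{T})}=\frac{\sum_{i=0}^{n-1}(a_i-a_n)}{\omega(\mathcal{T},\lambda)}$; the Hilbert--Mumford criterion in the form of Lemma \ref{hilbert-mumford-interms of omega} then feeds the strict/weak inequality for stable/semistable directly into the usual inf formulation of the lct of a linear system.
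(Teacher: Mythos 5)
Your overall strategy---deducing the global statement from the pointwise bounds of Corollary \ref{if_ss_lct_condition}---is workable, but it rests on an identification that is not correct as stated. The log canonical threshold of the linear system $\mathcal{T}$ is computed from its base ideal $\mathfrak{b}$, and since $\operatorname{ord}_E(\mathfrak{b})\leq \operatorname{ord}_E(F)$ for every member $F\in\mathcal{T}$ and every divisor $E$ over $\mathbb{P}^n$, one has $\mathrm{lct}_p(\mathbb{P}^n,\mathcal{T})\geq \mathrm{lct}_p(\mathbb{P}^n,F)$ for \emph{every} member; by Bertini and Koll\'ar's results the threshold of $\mathcal{T}$ agrees with that of a \emph{general} member, not with the infimum over all members (a pencil containing one especially degenerate member at a base point shows your infimum can be strictly smaller than $\mathrm{lct}(\mathbb{P}^n,\mathcal{T})$). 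Fortunately the identification is only used in the harmless direction: the one-sided inequality $\mathrm{lct}(\mathbb{P}^n,\mathcal{T})\geq \inf_{F,\,p\in\operatorname{Bs}(\mathcal{T})}\mathrm{lct}_p(\mathbb{P}^n,F)$ does hold, so bounding your infimum from below still yields the corollary; you should state this inequality rather than the false equality. Relatedly, ``away from the base locus the local lct is $1$'' is not the right reason to restrict to base points (members can be badly singular off the base locus, e.g.\ a member of the form $\ell^d$ when it lies in $\mathcal{T}$); the correct reason is that the base ideal is trivial there, so non-base points do not contribute to $\mathrm{lct}(\mathbb{P}^n,\mathcal{T})$.

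With that repair the argument is correct and takes a genuinely different route from the paper's, which reduces to the general member via Koll\'ar's Bertini-type theorem and Proposition \ref{rel wrt lct}; you instead apply Corollary \ref{if_ss_lct_condition} to every member at every base point. For strictness in the stable case, your compactness-plus-semicontinuity argument works but is heavier than necessary, and lower semicontinuity of $(F,p)\mapsto\mathrm{lct}_p(\mathbb{P}^n,F)$ in the divisor variable is itself a nontrivial result you would need to cite. A lighter route: $\mathrm{lct}(\mathbb{P}^n,\mathcal{T})$ is computed by finitely many divisors on a single log resolution of $\mathfrak{b}$, so the infimum over $p$ is attained at some $p_0\in\operatorname{Bs}(\mathcal{T})$, and then $\mathrm{lct}(\mathbb{P}^n,\mathcal{T})=\mathrm{lct}_{p_0}(\mathbb{P}^n,\mathcal{T})\geq \mathrm{lct}_{p_0}(\mathbb{P}^n,F)>\frac{n+1}{kd}$ for any member $F$, directly from Corollary \ref{if_ss_lct_condition}.
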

\begin{proof}
    This is a direct consequence of \cite[Theorem 4.8, Lemma 8.6]{kollar}, Bertini's Theorem and Proposition \ref{rel wrt lct}.
\end{proof}

\subsection{VGIT and log Canonical Thresholds}\label{VGIT section}
Consider now the GIT quotient $\mathcal{R}_m\sslash G$
which parametrizes tuples $(\mathcal{T},H_1,\dots,H_m)$ of complete intersections and $m$ hyperplanes in $\mathbb{P}^n$. 


Recall that from \cite[Lemma 4.10, \S 4.1]{me}, with respect to an ample linearisation $\mathcal{L} = \mathcal{O}(a,\vec{b})$ the Hilbert-Mumford function is
\begin{equation*}
    \begin{split}
        \mu_{\vec{t}}(S, H_1,\dots, H_m, \lambda) &\vcentcolon= \mu_{\vec{t}}(f_1\wedge\dots \wedge f_k, h_1,\dots, h_m, \lambda) \\
        &\coloneqq  \mu(f_1\wedge\dots \wedge f_k, \lambda) + \sum_{p=1}^mt_p\mu(H_p, \lambda)      \\
        &= \max \left\{\sum_{i=1}^k\langle I_i, \lambda\rangle \Big| (I_1,\dots,I_k)\in (\Xi_d)^k \text{, } I_i \neq I_j  \text{ if } i\neq j \text{, }x^{I_i}\in \operatorname{Supp}(f_i) \right\}\\
        &+ \sum_{p=1}^mt_p\max\{r_j| h_{j,p}\neq0, h_{j,p} \in \operatorname{Supp}(h_p)\},
    \end{split}
\end{equation*}
where $\vec{t} = (t_1,\dots,t_m)$ with $t_i= \frac{b_i}{a} \in \mathbb{Q}_{>0}$. Combining \cite[Definition 4.12]{me} and Lemma \ref{hilbert-mumford-interms of omega} we obtain:

\begin{lemma}\label{vgit hilbert}
With respect to a maximal torus $T$, the tuple $(\mathcal{T},H_1,\dots,H_m)$ is $\vec{t}-$unstable (respectively, $\vec{t}-$non-stable) if for some $\lambda$
$$\omega(\mathcal{T},\lambda)+\sum_{i=1}^mt_i\omega(H_i,\lambda) > \frac{kd+\sum_{i=1}^mt_i}{n+1}\bigg(\sum_{k=0}^{n-1}a_k - na_n\bigg) \text{ (resp. } \geq).$$
\end{lemma}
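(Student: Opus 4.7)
The plan is to combine the computation already carried out for $\mu(f_1 \wedge \dots \wedge f_k, \lambda)$ with analogous computations for each hyperplane $H_p$, then sum and invoke the standard Hilbert--Mumford criterion in the VGIT setting. The argument is essentially the same manipulation that yielded Lemma \ref{hilbert-mumford-interms of omega}, now applied summand by summand to the decomposition of $\mu_{\vec{t}}$ recalled at the start of this subsection.

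First, from the chain of equalities preceding Lemma \ref{hilbert-mumford-interms of omega} I would extract the identity
$$\mu(f_1 \wedge \dots \wedge f_k, \lambda) \;=\; -\omega(\mathcal{T}, \lambda) + \frac{kd}{n+1}\bigg(\sum_{j=0}^{n-1} a_j - na_n\bigg).$$
The identical manipulation, specialised to the degenerate case $k=1$, $d=1$ (so that sums over degree-$d$ monomials collapse to sums over the variables $x_j$ appearing in the support of each linear form $h_p$), yields
$$\mu(H_p, \lambda) \;=\; -\omega(H_p, \lambda) + \frac{1}{n+1}\bigg(\sum_{j=0}^{n-1} a_j - na_n\bigg)$$
for every $p = 1, \dots, m$, where $\omega(H_p,\lambda)$ is the affine weight of the hyperplane in the sense of Definition \ref{affine weight}.

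Inserting these two identities into the additive decomposition $\mu_{\vec{t}}(S, H_1, \dots, H_m, \lambda) = \mu(f_1 \wedge \dots \wedge f_k, \lambda) + \sum_{p=1}^m t_p \mu(H_p, \lambda)$ quoted from \cite{me} and collecting the coefficient $(\sum_{j=0}^{n-1} a_j - na_n)$, I obtain
$$\mu_{\vec{t}}(S, H_1, \dots, H_m, \lambda) \;=\; \frac{kd + \sum_{p=1}^m t_p}{n+1}\bigg(\sum_{j=0}^{n-1} a_j - na_n\bigg) - \omega(\mathcal{T}, \lambda) - \sum_{p=1}^m t_p\, \omega(H_p, \lambda).$$
The Hilbert--Mumford numerical criterion for the polarised $G$-action on $\mathcal{R}_m$ states that the tuple is $\vec{t}$-unstable (respectively $\vec{t}$-non-stable) precisely when $\mu_{\vec{t}}<0$ (resp.\ $\leq 0$) for some normalised one-parameter subgroup $\lambda$; rearranging the displayed equality then gives exactly the stated inequality, with strict inequality in the unstable case and weak inequality in the non-stable case.

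The main obstacle is bookkeeping rather than conceptual: one has to reconcile the two sign conventions in play, namely the $-\min$ convention used to define $\omega$ in Definition \ref{omega for ktuples} and Definition \ref{affine weight}, and the $\max$ convention appearing in the Hilbert--Mumford formula quoted from \cite{me} at the start of this subsection. Once one verifies that these express the same quantity (either by negating $\lambda$ or by noting that $\max\{\langle I,\lambda\rangle\} = -\min\{-\langle I,\lambda\rangle\}$ together with normalisation), the proof reduces to the linear substitution above and requires no further ingredients.
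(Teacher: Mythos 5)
Your proof is correct and follows essentially the same route as the paper: the paper's own argument simply cites the additive decomposition $\mu_{\vec{t}} = \mu(\mathcal{T},\lambda) + \sum_i t_i\mu(H_i,\lambda)$ and says the result follows, which is exactly the substitution you carry out explicitly (including the hyperplane identity $\mu(H_p,\lambda) = -\omega(H_p,\lambda) + \tfrac{1}{n+1}(\sum_{j=0}^{n-1}a_j - na_n)$ and the sign-convention reconciliation). Your write-up is in fact more complete than the paper's one-line proof.
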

\begin{proof}
From \cite[Lemma 4.12]{me}, we have that $\mu_{\vec{t}}(\mathcal{T},H_1,\dots,H_m,\lambda) = \mu(\mathcal{T},\lambda) +\sum_{i=0}^mt_i\mu(H_i,\lambda)$. The result then follows.
\end{proof}

We expand Proposition \ref{equality of omegas} as follows:

\begin{proposition}\label{equality of omegas-vgit}
Let $F =\{f=0\} \in \mathcal{T}$ be a hypersurface of degree d, and $\lambda$ a one-parameter subgroup. Then there exist $k-1$ hypersurfaces $g_1, \dots, g_{k-1}\in \mathcal{T}$ such that 
$$\omega(\mathcal{T},\lambda)+\sum_{i=1}^mt_i\omega(H_i,\lambda) = \omega(f,\lambda)+\sum_{i=1}^{k-1} \omega(g_i,\lambda)+\sum_{i=1}^mt_i\omega(H_i,\lambda).$$
\end{proposition}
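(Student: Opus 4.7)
The plan is to reduce the statement directly to its non-VGIT counterpart, Proposition \ref{equality of omegas}. That earlier result already supplies, for any fixed $F = \{f=0\} \in \mathcal{T}$ and any normalised one-parameter subgroup $\lambda$, the existence of $k-1$ hypersurfaces $g_1, \dots, g_{k-1} \in \mathcal{T}$ such that
$$\omega(\mathcal{T},\lambda) = \omega(f,\lambda) + \sum_{i=1}^{k-1} \omega(g_i,\lambda).$$
The construction there was explicit: $g_i$ was chosen recursively so that its $\lambda$-minimum monomial is the $\lambda$-minimum of $\operatorname{Supp}(f_{i+1}) \setminus \bigcup_{j \leq i} \operatorname{Supp}(f_j)$, producing $k-1$ distinct leading monomials which exactly account for the remaining summands in $\omega(\mathcal{T},\lambda)$.

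Once those hypersurfaces are in hand, the VGIT identity follows by adding the quantity $\sum_{i=1}^m t_i\,\omega(H_i,\lambda)$ to both sides. Crucially, this hyperplane contribution is a function only of the hyperplanes $H_i$, the weights $t_i$, and the one-parameter subgroup $\lambda$; it is independent of the chosen decomposition of $\mathcal{T}$ into the hypersurfaces $f, g_1, \dots, g_{k-1}$, so no interaction with the recursive choice on the Grassmannian side arises. The only thing to verify is that the decomposition $\mu_{\vec t}(\mathcal{T},H_1,\dots,H_m,\lambda) = \mu(\mathcal{T},\lambda) + \sum_{i=1}^m t_i \mu(H_i,\lambda)$ of Lemma \ref{vgit hilbert} translates, after passing from $\mu$ to $\omega$ via the normalisation in Definition \ref{omega for ktuples}, into a parallel additive splitting of the affine weights; but this is immediate from the definitions, since subtracting $\tfrac{kd+\sum_i t_i}{n+1}(\sum_k a_k - n a_n)$ from $\mu_{\vec t}$ distributes term-wise exactly as in the derivation preceding Definition \ref{omega for ktuples}.

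There is no real obstacle in this proof—the combinatorial work was already carried out in Proposition \ref{equality of omegas}, and the present proposition is essentially a tautological extension, recording that adding equal quantities to both sides preserves equality. Accordingly, I expect the proof to be a one-line reduction with an invocation of Proposition \ref{equality of omegas}.
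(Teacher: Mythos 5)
Your reduction is exactly what the paper intends: it states this proposition as an immediate expansion of Proposition \ref{equality of omegas} and omits any written proof, since the hyperplane term $\sum_{i=1}^m t_i\,\omega(H_i,\lambda)$ is simply added to both sides of the earlier identity. Your proposal is correct and matches the paper's (implicit) argument.
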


By Fujita \cite{fujita2017kstability}, the log-canonical threshold  $\mathrm{lct}(\mathbb{P}^n, H_i) = 1$ for all hyperplanes $H_i$, and hence 
$$\frac{\omega(H_i,\lambda)}{\bigg(\sum_{k=0}^{n-1}a_k - na_n\bigg)} \leq 1,$$
so in combination with Proposition \ref{rel wrt lct} we obtain the following. 

\begin{proposition}\label{rel wrt lct vgit}
Given a tuple $(\mathcal{T},H_1\dots,H_m)\in \mathcal{R}_{m}$ we have that for any one-parameter subgroup $\lambda$ there exists $F =\{f=0\} \in \mathcal{T}$ such that

$$\frac{\omega(\mathcal{T},\lambda)+\sum_{i=1}^mt_i\omega(H_i,\lambda)}{\bigg(\sum_{k=0}^{n-1}a_k - na_n\bigg)}\leq \frac{k}{\mathrm{lct}(\mathbb{P}^n,F)}+\sum_{i=1}^mt_i.$$

\end{proposition}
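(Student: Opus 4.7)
The plan is essentially to decouple the two contributions in the numerator and bound each one separately, using results already assembled in the paper. The statement is a sum of two terms divided by the common positive quantity $\sum_{k=0}^{n-1}a_k - na_n$, so it suffices to handle the $\mathcal{T}$-part and the hyperplane-part independently and then add.

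First I would apply Proposition \ref{rel wrt lct} to the fixed one-parameter subgroup $\lambda$: this produces a hypersurface $F=\{f=0\}\in \mathcal{T}$ (depending on $\lambda$) such that
\[
\frac{\omega(\mathcal{T},\lambda)}{\sum_{k=0}^{n-1}a_k - na_n}\leq \frac{k}{\mathrm{lct}(\mathbb{P}^n,F)}.
\]
This will be the $F$ used in the final statement. Next I would treat each hyperplane $H_i$ separately using Fujita's computation $\mathrm{lct}(\mathbb{P}^n, H_i)=1$ cited just before the proposition, together with Kollár's bound \cite[Proposition 8.13]{kollar} already invoked in Proposition \ref{rel wrt lct}, which yields
\[
\frac{\omega(H_i,\lambda)}{\sum_{k=0}^{n-1}a_k - na_n}\leq \frac{1}{\mathrm{lct}(\mathbb{P}^n,H_i)}=1
\]
for every $i$. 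Multiplying by $t_i>0$ and summing over $i=1,\dots,m$ gives $\sum_i t_i\omega(H_i,\lambda)/(\sum_{k=0}^{n-1}a_k-na_n)\leq \sum_i t_i$.

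Finally I would add the two estimates, observing that $\sum_{k=0}^{n-1}a_k-na_n>0$ for any non-trivial normalised one-parameter subgroup so the division is valid, to obtain exactly
\[
\frac{\omega(\mathcal{T},\lambda)+\sum_{i=1}^m t_i\omega(H_i,\lambda)}{\sum_{k=0}^{n-1}a_k - na_n}\leq \frac{k}{\mathrm{lct}(\mathbb{P}^n,F)}+\sum_{i=1}^m t_i.
\]
There is essentially no obstacle here: the only subtlety is that the choice of $F$ must be made before choosing hyperplane bounds, but since $F$ depends only on $\lambda$ and $\mathcal{T}$ (not on the $H_i$), and the hyperplane bound is uniform in $\lambda$, the two steps combine without circularity. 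The proof is a short two-line assembly once Proposition \ref{rel wrt lct} and the Fujita remark are in hand.
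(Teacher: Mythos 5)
Your proof is correct and is essentially identical to the paper's argument: the paper likewise derives the bound $\omega(H_i,\lambda)/\bigl(\sum_{k=0}^{n-1}a_k - na_n\bigr)\leq 1$ from Fujita's computation $\mathrm{lct}(\mathbb{P}^n,H_i)=1$ together with Koll\'ar's inequality, and then adds this to the estimate of Proposition \ref{rel wrt lct}. No further comment is needed.
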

\begin{corollary}\label{vgit_corollary_zan}
If a tuple $(\mathcal{T},H_1\dots,H_m)\in \mathcal{R}_{m}$ is such that $\mathrm{lct}(\mathbb{P}^n,F)\geq \frac{k(n+1)}{kd-n\sum_{i=1}^mt_i}$ (respectively, $>\frac{k(n+1)}{kd-n\sum_{i=1}^mt_i}$)
for any hypersurface $f$ in $\mathcal{T}$, then $(\mathcal{T},H_1\dots,H_m)\in \mathcal{R}_{m}$ is $\vec{t}-$semistable (respectively, $\vec{t}-$stable).
\end{corollary}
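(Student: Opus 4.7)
The plan is to mirror the proof of Corollary \ref{zanardini corollary to go in intro}, substituting Proposition \ref{rel wrt lct vgit} for Proposition \ref{rel wrt lct} and Lemma \ref{vgit hilbert} for Lemma \ref{hilbert-mumford-interms of omega}. The argument is essentially a chain of inequalities coupled with a single algebraic rearrangement, so there should be no real conceptual obstacle; the only thing to be careful about is matching the constants on both sides.

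First, I would fix an arbitrary normalised one-parameter subgroup $\lambda$ and invoke Proposition \ref{rel wrt lct vgit} to produce some hypersurface $F = \{f=0\} \in \mathcal{T}$ realising the inequality
\[
\frac{\omega(\mathcal{T},\lambda)+\sum_{i=1}^mt_i\omega(H_i,\lambda)}{\sum_{j=0}^{n-1}a_j - na_n}\leq \frac{k}{\mathrm{lct}(\mathbb{P}^n,F)}+\sum_{i=1}^mt_i.
\]
Because the hypothesis bounds $\mathrm{lct}(\mathbb{P}^n,F)$ from below by $\tfrac{k(n+1)}{kd-n\sum t_i}$ for every $F \in \mathcal{T}$, we have in particular $\tfrac{k}{\mathrm{lct}(\mathbb{P}^n,F)} \leq \tfrac{kd - n\sum_{i=1}^m t_i}{n+1}$.

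Substituting this into the previous inequality and performing the routine simplification
\[
\frac{kd - n\sum_{i=1}^m t_i}{n+1} + \sum_{i=1}^m t_i \;=\; \frac{kd + \sum_{i=1}^m t_i}{n+1},
\]
I obtain
\[
\omega(\mathcal{T},\lambda)+\sum_{i=1}^mt_i\omega(H_i,\lambda) \leq \frac{kd + \sum_{i=1}^m t_i}{n+1}\left(\sum_{j=0}^{n-1}a_j - na_n\right),
\]
which, by the contrapositive of Lemma \ref{vgit hilbert}, is precisely the statement that $(\mathcal{T},H_1,\dots,H_m)$ is $\vec{t}$-semistable with respect to $\lambda$. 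Since $\lambda$ was arbitrary, $\vec{t}$-semistability follows. The strict version is obtained by replacing $\leq$ with $<$ throughout, yielding $\vec{t}$-stability. The only step that could even plausibly cause trouble is the arithmetic collapsing of $\tfrac{kd-n\sum t_i}{n+1}+\sum t_i$ into $\tfrac{kd+\sum t_i}{n+1}$, and that is a one-line verification.
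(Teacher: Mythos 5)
Your proof is correct and follows essentially the same route as the paper: apply Proposition \ref{rel wrt lct vgit} to get the key inequality, bound $\tfrac{k}{\mathrm{lct}(\mathbb{P}^n,F)}$ using the hypothesis, simplify $\tfrac{kd-n\sum t_i}{n+1}+\sum t_i=\tfrac{kd+\sum t_i}{n+1}$, and conclude via Lemma \ref{vgit hilbert}. (In fact your observation that the final simplification is an exact equality is cleaner than the paper's, which writes it as a $\leq$.)
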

\begin{proof}
From Proposition \ref{zan_lct_analog}, let $f\in \mathcal{T}$ be such that $\mathrm{lct}(\mathbb{P}^n,F)\geq \frac{k(n+1)}{kd-n\sum_{i=1}^mt_i}$ (respectively, $>\frac{k(n+1)}{kd-n\sum_{i=1}^mt_i}$). Then for all $\lambda$,
\begin{equation*}
    \begin{split}
        \frac{\omega(\mathcal{T},\lambda)+\sum_{i=1}^mt_i\omega(H_i,\lambda)}{\bigg(\sum_{k=0}^{n-1}a_k - na_n\bigg)}&\leq \frac{k}{\mathrm{lct}(\mathbb{P}^n,F)}+\sum_{i=1}^mt_i\\
        &\leq  \frac{kd-n\sum_{i=1}^mt_i}{n+1}+\sum_{i=1}^mt_i \text{ (respectively, } <)\\
        &\leq  \frac{kd+\sum_{i=1}^mt_i}{n+1} \text{ (respectively, } <).
    \end{split}
\end{equation*}

\end{proof}

\printbibliography

\end{document}